\g@addto@macro{\UrlBreaks}{\UrlOrds}
\providecommand{\noopsort}[1]{} 
\newtheorem{Th}{Theorem}[section]
\newtheorem{Lemma}[Th]{Lemma}
\newtheorem{Remark}[Th]{Remark}
\newtheorem{Def}{Definition}[section]
\newtheorem{Cor}[Th]{Corollary}
\newcommand{\beq}{\begin{equation}}
\newcommand{\eeq}{\end{equation}}
\def\scalar(#1,#2){(#1\mid#2)}
\newcommand{\cb}{\mathcal{B}}
\newcommand{\cd}{\mathcal{D}}
\newcommand{\zdr}{(Z,\mathcal{D},\rho)}
\newcommand{\ov}{\overline}
\newcommand{\bs}{\mathbb{S}}
\newcommand{\R}{{\mathbb{R}}}
\newcommand{\T}{{\mathbb{T}}}
\newcommand{\C}{{\mathbb{C}}}
\newcommand{\Z}{{\mathbb{Z}}}
\newcommand{\N}{{\mathbb{N}}}
\newcommand{\PP}{{\mathbb{P}}}
\newcommand{\D}{{\mathbb{D}}}
\newcommand{\vep}{\varepsilon}
\newcommand{\mob}{\mu}
\newcommand{\lio}{\lambda}
\newcommand{\bfu}{\boldsymbol{u}}
\title{Rigidity in dynamics and M\"obius disjointness}
\author{Adam Kanigowski}
\address{
  Department of Mathematics, University of Maryland at College Park, College Park,MD 20740, USA\\
E-mail address: adkanigowski@gmail.com
}
\author{Mariusz Lema\'nczyk}
\address{
Faculty of Mathematics and Computer Science, Nicolaus Copernicus University, Chopin street 12/18, 87-100 Toru\'n, Poland\\
E-mail address: mlem@mat.umk.pl
}
\author{Maksym Radziwi\l\l}
\address{
Department of Mathematics, Caltech, 1200 E California BLVD, Pasadena, CA
91125, USA\\
E-mail address: maksym.radziwill@gmail.com
}
\begin{document}
\maketitle
\thispagestyle{empty}
\begin{abstract} Let $(X, T)$ be a topological dynamical system. We show that if each invariant measure of $(X, T)$ gives rise to a measure-theoretic dynamical system that is either:\begin{itemize}
\item[a.] rigid along a sequence of ``bounded prime volume'' or
\item[b.] admits a polynomial rate of rigidity on a linearly dense subset in $C(X)$
\end{itemize}
then $(X, T)$ satisfies Sarnak's conjecture on M\"obius disjointness. We show that the same conclusion also holds if there are countably many invariant ergodic measures, and each of them satisfies a. or b. This recovers some earlier results and implies Sarnak's conjecture in the following new cases: for almost every interval exchange map of $d$ intervals with $d \geq 2$, for $C^{2+\epsilon}$-smooth skew products over rotations and $C^{2+\epsilon}$-smooth flows (without fixed points) on the torus. In particular, these are improvements of earlier results of respectively Chaika-Eskin, Wang and Huang-Wang-Ye. We also discuss some purely arithmetic consequences for the Liouville function.
\end{abstract}
\section{Introduction}
Let $(X,d)$ be a compact metric space and $T:X\to X$ a homeomorphism. We call $(X, T)$ a \textit{topological dynamical system}. Sarnak's conjecture \cite{Sa} asserts that if $(X,T)$ is of zero entropy, then
\beq\label{sar1}\lim_{N\to\infty}\frac1N\sum_{n\leq N}f(T^nx)\mob(n)=0\eeq
for all continuous $f : X \rightarrow \R$ and all $x \in X$; here $\mu$ stands for the M\"obius function.

Let $M(X, T)$ denote the set of Borel $T$-invariant probability measures on $X$; this is a simplex whose set of extremal points coincides with $M^{e}(X, T)$, the set of ergodic measures.
Any measure $\nu$ in $M(X, T)$ gives rise to a measure-theoretic dynamical system $(X, \mathcal{B}, \nu, T)$ with $\mathcal{B}=\mathcal{B}(X)$ the Borel $\sigma$-algebra of subsets of $X$. Such a system is called  {\em rigid} if there exists a strictly increasing sequence $\{q_n\}_{n \geq 1}$ such that for every $g\in L^2(X,\nu)$, $g\circ T^{q_n}\to g$ in $L^2(X,\nu)$ as $n \rightarrow \infty$. Rigid systems have (measure-theoretic) entropy zero, so by the variational principle,  one naturally expects the validity of~\eqref{sar1} in the topological systems whose all invariant measures yield rigidity.

Considerable progress in understanding Sarnak's conjecture has been accomplished following the result of Matom\"aki and the third author  \cite{Ma-Ra} which roughly says that the behavior of $\mu$ on typical short intervals is the same as its behavior on intervals $[1,N]$. In trying to prove~\eqref{sar1} for rigid systems we need a variant of \cite{Ma-Ra} that holds for almost all short arithmetic progressions. Such an extension is obtained in Theorem~\ref{Thm:ad} (see also \cite{Mangerel} for a very recent related result). Somewhat surprisingly we encounter a significant technical complication that requires us to assume that $\sum_{p|q}1/p$ is not too large compared to $\sum_{p \leq H} 1/p$. Roughly speaking this means that we need to avoid those $q$ that have a lot of prime factors $\leq H$.
We call $\sum_{p | q} 1/p$ the {\em prime volume} of $q$. The prime volume grows slowly with $q$: in Lemma~\ref{lem:qa} and Corollary~\ref{cor:densone}, we observe that
\begin{equation}\label{ogrmax}
\sum_{p|q}\frac1p\leq \log\log\log q+O(1)
\end{equation}
but ``most'' of the time, the prime volume of $q$ stays bounded. Precisely, if we set
\begin{equation}\label{eq:cd}
\mathcal{D}_j:=\Big\{q\in \N\;:\; \sum_{p|q}\frac{1}{p}<j\Big\},
\end{equation}
where obviously $\mathcal{D}_j\subset \mathcal{D}_{j+1}$ for $j\in \N$, then
\begin{equation}\label{ogrmin}
  d(\mathcal{D}_j)\to1\text{ when }j\to\infty.\end{equation}
Note that the density $d(\mathcal{D}_j)$ of $\mathcal{D}_{j}$ exists for each $j$ and equals $\mathbb{P}(\sum_{p}X_p<j)$ with $\{X_{p}\}$ a sequence of Bernoulli independent random variables with $\mathbb{P}(X_{p} = 1) = 1 - \mathbb{P}(X_p = 0) = 1/p$.

Before we state our main results, we need to define the class of dynamical systems for which our results will be applicable.

\begin{Def}\label{def:good}\em Let $(X,T)$ be a topological dynamical system.  We say that $(X,T)$ is {\em good} if for every $\nu\in M(X,T)$ at least one of the following conditions holds:\\
(BPV rigidity): $(X, \mathcal{B}, \nu, T)$ is rigid along a sequence $\{q_n\}_{n\geq1}$ with {\bf bounded prime volume}, i.e.\ there exists $j$ such that $\{q_n\}_{n\geq1}\subset \mathcal{D}_j$;\\
(PR rigidity): $(X,\mathcal{B},\nu,T)$ has {\bf polynomial rate} of rigidity: there exists a  linearly dense (in $C(X)$) set $\mathcal{F}\subset C(X)$ such that for each $f\in\mathcal{F}$ we can find $\delta>0$ and a sequence $\{q_n\}_{n\geq1}$ satisfying
\begin{equation}\label{eq:conv0}
\sum_{j=-q_n^\delta}^{q_n^\delta}\|f\circ T^{jq_n}-f\|_{L^2(\nu)}^2\to 0.
\end{equation}
\end{Def}

Here is our first result.

\begin{Th}\label{thm:main}
Let $(X,T)$ be a {\bf good} topological dynamical system.  Then $(X,T)$ is M\"obius disjoint, that is,~\eqref{sar1} holds for all continuous $f : X \to \R$ and all $x \in X$.
\end{Th}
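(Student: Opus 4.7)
Fix $f\in C(X)$, $x\in X$, and $\varepsilon>0$, and set $A_N := \frac{1}{N}\sum_{n\le N} f(T^n x)\mu(n)$. Suppose, for contradiction, that $|A_{N_k}|\ge\varepsilon$ along some subsequence $N_k\to\infty$. By compactness of $M(X,T)$ in the weak-$*$ topology, after passing to a further subsequence the empirical measures $\nu_{N_k}:=\frac{1}{N_k}\sum_{n\le N_k}\delta_{T^n x}$ converge weakly-$*$ to some $\nu\in M(X,T)$, which by hypothesis is either BPV or PR rigid. It then suffices to derive $A_{N_k}\to 0$ in each case. In the PR case we may further assume $f\in\mathcal{F}$ by linearity of $A_N$ in $f$ and the linear density of $\mathcal{F}$ in $C(X)$.

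\noindent\textbf{Reduction to short-AP sums of $\mu$.} Both cases will be handled by a common scheme. Given a rigidity modulus $q$ and a window length $H$ chosen so that $\|f\circ T^{hq}-f\|_{L^2(\nu)}<\eta$ for each $0\le h<H$ (obtained in BPV by fixing $H$ first and then taking $q=q_m$ with $m$ large, using the triangle inequality; and in PR directly from Cauchy--Schwarz applied to \eqref{eq:conv0}), weak-$*$ convergence $\nu_{N_k}\to\nu$ together with boundedness of $f$ yields
\[ \frac{1}{N_k}\sum_{n\le N_k}\Bigl|f(T^n x)-\frac{1}{H}\sum_{h=0}^{H-1}f(T^{n+hq}x)\Bigr|^2 \;=\; O(\eta^2)+o_k(1). \]
Substituting this orbital average for $f(T^n x)$ in $A_{N_k}$ and performing the change of variables $m=n+hq$ (with a negligible boundary contribution) transforms $A_{N_k}$, up to an error of $O(\eta)+o_k(1)$, into
\[ \frac{1}{N_k}\sum_{m\le N_k} f(T^m x)\,\cdot\,\frac{1}{H}\sum_{h=0}^{H-1}\mu(m-hq). \]
The inner factor is the mean of $\mu$ along a short arithmetic progression of length $H$ and common difference $q$. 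By Theorem~\ref{Thm:ad}, this mean is $o(1)$ on a density-$1$ set of starting points $m$ whenever $\sum_{p\mid q}1/p$ is not too large compared with $\sum_{p\le H}1/p$. In the BPV case this is ensured by $q\in\mathcal{D}_j$ and taking $H$ large. In the PR case we take $H\asymp q^\delta$; then $\sum_{p\le H}1/p\asymp\log\log q$, while the trivial bound \eqref{ogrmax} gives $\sum_{p\mid q}1/p\le\log\log\log q+O(1)$, so the required constraint is automatic. In either situation a Cauchy--Schwarz step in $m$, combined with $|f|\le\|f\|_\infty$, yields $|A_{N_k}|=O(\eta)+o_k(1)$; sending first $k\to\infty$ and then $\eta\to 0$ gives $A_{N_k}\to 0$, the desired contradiction.

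\noindent\textbf{Main obstacle.} The principal difficulty lies in transferring the measure-theoretic rigidity statement, an estimate in $L^2(\nu)$, into pointwise cancellation along the specific orbit $\{T^n x\}$, which need not be $\nu$-generic. This is accomplished by the weak-$*$ convergence $\nu_{N_k}\to\nu$, but it forces a delicate interlocking of the parameters: $\eta$ is chosen first in terms of $\varepsilon$, then $q$ and $H$ (and hence the rigidity index $m$, the prime-volume threshold $j$ in BPV, or the polynomial exponent $\delta$ in PR) are fixed so that both the rigidity estimate and the quantitative short-AP Matom\"aki--Radziwi{\l}{\l} Theorem~\ref{Thm:ad} apply, and only then is $k$ sent to infinity. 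The essential arithmetic input is Theorem~\ref{Thm:ad}; its hypothesis $\sum_{p\mid q}1/p\lesssim\sum_{p\le H}1/p$ is precisely the constraint that dictates the BPV versus PR dichotomy in Definition~\ref{def:good}, and it is for the same reason that one cannot afford a rigidity sequence of completely arbitrary prime-volume growth in the BPV setting.
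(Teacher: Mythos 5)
Your proposal is correct and follows essentially the same route as the paper: the same reduction to quasi-generic measures, the same parameter hierarchy ($\eta$ first, then the rigidity modulus $q$ and window $H$, and only then $N_k\to\infty$), and the same key arithmetic input (Theorem~\ref{Thm:ad}), with the BPV/PR dichotomy entering exactly where you place it. The only difference is organizational: you transfer the averaging over the progression from $f$ onto $\mu$ by a change of variables, whereas the paper partitions $[0,M]$ into blocks of length $L_nq_n$ and isolates ``good'' blocks and residue classes before invoking the pigeonholed form of Theorem~\ref{Thm:ad} given in Corollary~\ref{cor1}; the two bookkeeping schemes are equivalent.
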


To put our second result in context, we recall also a recent result of Frantzikinakis-Host \cite{Fr-Ho} according to which Sarnak's conjecture holds in the {\bf logarithmic form} for all zero entropy topological dynamical systems with at most countably many ergodic invariant measures.

\begin{Th}\label{thm:mainE}
Let $(X,T)$ be a topological dynamical system. Suppose that the set of ergodic measures $M^e(X,T)$ is countable. If for every $\nu\in M^e(X,T)$ the measure-theoretic dynamical system $(X, \mathcal{B}, \nu, T)$ satisfies either BPV rigidity or PR rigidity (see Definition~\ref{def:good}), then $(X,T)$ is M\"obius disjoint; that is,~\eqref{sar1} holds for all continuous $f : X \rightarrow \R$ and all $x \in X$.
\end{Th}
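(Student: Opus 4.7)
My plan is to adapt the proof of Theorem~\ref{thm:main} to the countable ergodic setting via a truncation and simultaneous-rigidity-extraction argument. Fix $x\in X$ and $f\in C(X)$ and suppose for contradiction that $|\frac{1}{N_k}\sum_{n\leq N_k}f(T^nx)\mob(n)|\geq\delta>0$ along some subsequence $N_k\to\infty$. Passing to a further subsequence, the empirical measures $\nu_{N_k}:=\frac{1}{N_k}\sum_{n\leq N_k}\delta_{T^nx}$ converge weak-$\ast$ to some $T$-invariant probability $\nu$. Since $M^e(X,T)$ is countable, the ergodic decomposition of $\nu$ collapses to a discrete convex combination
\[
\nu=\sum_{i\geq 1}c_i\nu_i,\qquad \nu_i\in M^e(X,T),\ \sum_i c_i=1.
\]

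Given $\varepsilon>0$, I would first pick $K$ with $\sum_{i>K}c_i<\varepsilon$, so that $\nu$ is within total variation $\varepsilon$ of a finite convex combination of $\nu_1,\ldots,\nu_K$. The key intermediate step will be to produce an integer $q$ that is simultaneously an approximate rigidity time of $f$ in $L^2(\nu_i)$ for every $i\leq K$ and that satisfies the BPV condition $q\in\mathcal{D}_j$ for some $j$ controlled by $\max_{i\leq K} j_i$ (or, in the PR case, the corresponding polynomial-rate inequality on an interval around $q$). Once such $q$ is in hand, $\|f\circ T^q - f\|_{L^2(\nu)}^2<\varepsilon+4\|f\|_\infty^2\varepsilon$; and since $\nu_{N_k}\to\nu$ weak-$\ast$, applying this to the continuous test function $y\mapsto|f(T^qy)-f(y)|^2$ gives the orbital bound $\frac{1}{N_k}\sum_{n\leq N_k}|f(T^{n+q}x)-f(T^nx)|^2=O(\varepsilon)$ for $k$ large. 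One then runs the proof of Theorem~\ref{thm:main} verbatim: split $[1,N_k]$ into residues mod $q$, use the approximate near-constancy of $f(T^nx)$ on each AP, and invoke Theorem~\ref{Thm:ad} to cancel $\mob$ on each AP of common difference $q\in\mathcal{D}_j$. Summing over residues yields $\frac{1}{N_k}\sum_{n\leq N_k} f(T^nx)\mob(n)=O(\sqrt{\varepsilon})+o_k(1)$, contradicting the lower bound $\geq\delta$ for $\varepsilon$ small.

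\noindent\textbf{Main obstacle.} The delicate step will be the simultaneous-rigidity-time extraction: the rigidity sequences $\{q_n^{(i)}\}_n$ for distinct $\nu_i$ may share essentially no common elements, so producing a single $q$ that works for every $i\leq K$ at once is nontrivial. Two plausible routes suggest themselves: \emph{(i)} diagonalize, using that $d(\mathcal{D}_j)\to 1$, to iteratively intersect the BPV rigidity times with density-one subsets; \emph{(ii)} refine the orbital splitting so that each ergodic component $\nu_i$ is handled with its own $q^{(i)}$ on a designated portion of $[1,N_k]$ and then glue the resulting M\"obius cancellations by a telescoping argument. Countability of $M^e(X,T)$ is essential throughout: it reduces the ergodic decomposition of every subsequential limit to a discrete sum which can be truncated at finite $K$, which is precisely what makes such an extraction feasible; in the uncountable case the decomposition would be a continuous integral and this reduction would not be available, explaining why Theorem~\ref{thm:mainE} requires countability.
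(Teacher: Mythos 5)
There is a genuine gap, and it is exactly the one you flag yourself as the ``main obstacle'': the simultaneous rigidity time does not exist in general, and neither of your two proposed routes produces it. Two ergodic rigid systems $\nu_1,\nu_2$ can have rigidity sequences that are both sparse and disjoint; the fact that $d(\mathcal{D}_j)\to1$ is irrelevant here, since intersecting a sparse rigidity sequence with a density-one set like $\mathcal{D}_j$ says nothing about its intersection with the (equally sparse) rigidity sequence of the other measure. So route \emph{(i)} fails outright, and with it the displayed bound $\|f\circ T^q-f\|^2_{L^2(\nu)}<\varepsilon+4\|f\|^2_\infty\varepsilon$, which presupposes the single $q$. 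Route \emph{(ii)} is the germ of the correct idea, but ``glue the resulting M\"obius cancellations by a telescoping argument'' is precisely the step that requires a new theorem rather than bookkeeping: the orbit segment $[1,N_k]$ of a point quasi-generic for $\nu=\sum_i c_i\nu_i$ does not come with a canonical partition into blocks generic for the individual $\nu_i$, and even after one manufactures such a blockwise approximation one needs the M\"obius cancellation on each block to hold \emph{uniformly over the block's starting point and length}, which is strictly stronger than M\"obius disjointness of each component.

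This is why the paper does not argue via Theorem~\ref{thm:main} at all. Its proof of Theorem~\ref{thm:mainE} goes through the strong MOMO property: for each ergodic $\nu_i$ one takes a uniquely ergodic Jewett--Krieger model, shows (Corollary~\ref{c:momo1}) that BPV or PR rigidity upgrades M\"obius disjointness of that model to the strong MOMO property, and then invokes the equivalence \textbf{(PF1)}$\Leftrightarrow$\textbf{(PF2)}$\Leftrightarrow$\textbf{(PF3)} of Theorem~\ref{t:main2} (proved in the appendix via Lusin-type compact sets, a shadowing argument matching each orbit block $[b_k,b_{k+1})$ to a single model $(Y_{j_k},S_{j_k})$, and a $\Sigma_3$-suspension trick). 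The strong MOMO property is exactly the uniform-in-starting-point, blockwise form of cancellation that makes your route \emph{(ii)} gluing step legitimate; without it, your outline stops where the real work begins. (Your closing remark about countability is correct in spirit --- it is what reduces $V(x)$ to discrete convex combinations in \textbf{(PF2)} --- but countability alone does not substitute for the model-transfer machinery.)
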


One of the main techniques for establishing Sarnak's conjecture is the Daboussi-Delange-K\'atai-Bourgain-Sarnak-Ziegler criterion (colloquially

known as DDKBSZ, see \cite{Bo-Sa-Zi}, \cite{Kat}) and its measure-theoretic counterpart called AOP \cite{Ab-Ku-Le-Ru}. This criterion reduces establishing M\"obius disjointness to understanding the \textit{joinings} of $T^{p}$ and $T^{q}$ for distinct sufficiently large primes $p$ and $q$. For rigid systems the set of joinings is typically very complicated, making it impossible to appeal to the DDKBSZ criterion in this setting. In fact, only two general criteria that go beyond the DDKBSZ criterion are known:
  \begin{enumerate}
  \item \label{it:first} M\"obius disjointness holds for dynamical systems whose all invariant measures yield systems with discrete spectrum \cite{Hu-Wa-Zh} (see also Cor.\ 3.20 in \cite{Fe-Ku-Le}).
  \item \label{it:second} M\"obius disjointness holds for systems with all invariant measures of subpolynomial complexity \cite{Hu-Wa-Ye}.
  \end{enumerate}
  Note that the dynamical systems in \eqref{it:first} are rigid, since systems with discrete spectrum have a countable group of eigenvalues. In fact, all the examples given in \cite{Hu-Wa-Ye} for which the second criterion \eqref{it:second} applies are also rigid.
  Hence, our result can be seen as a new one on the short list of those which are complementary to the DDKBSZ criterion.

  In  Section~\ref{s:jakstosowac}, we give  a short outline of  the differences between BPV rigidity and PR rigidity, and discuss their common features. 
  Both conditions introduce a kind of stability in our M\"obius disjointness results: if in one uniquely ergodic model of a measure-preserving automorphism, BPV or PR rigidity holds then in all remaining uniquely ergodic models of the automorphism we have M\"obius disjointness (see Corollary~\ref{c:momo1} and Theorem~\ref{t:main2}). This applies, in particular, to all examples which are considered in Corollaries~\ref{c:rig1a},~\ref{c:rig3},~\ref{cor:spflow} and~\ref{rokext}.

Let us now highlight the old and new results that follow as special cases of Theorem~\ref{thm:main} and Theorem~\ref{thm:mainE}. In Subsection \ref{sub:iet}, we prove the following corollary:

\begin{Cor}\label{c:rig1a} For every $d\geq 2$, almost every  interval exchange transformation (IET) of $d$ intervals is M\"obius disjoint.
\end{Cor}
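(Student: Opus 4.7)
The plan is to apply Theorem~\ref{thm:main} after showing that almost every interval exchange transformation $T$ of $d$ intervals is good in the sense of Definition~\ref{def:good}. By the Masur--Veech theorem, almost every IET in a given Rauzy class is uniquely ergodic, with Lebesgue measure $\mathrm{Leb}$ as its unique invariant probability measure; hence $M(X,T)=\{\mathrm{Leb}\}$ and it suffices to establish PR rigidity for $(X,\mathcal{B},\mathrm{Leb},T)$.

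Choose as $\mathcal{F}\subset C(X)$ a countable dense collection of Lipschitz functions, which is linearly dense in $C(X)$. For such $f$,
\[
\|f\circ T^{jq_n}-f\|_{L^2}^{2}\leq \mathrm{Lip}(f)^{2}\int |T^{jq_n}x-x|^{2}\,d\mathrm{Leb}(x),
\]
so the task reduces to producing rigidity times $q_n$ such that $T^{jq_n}$ is quantitatively close to the identity on most of $X$, with the approximation surviving $|j|\leq q_n^{\delta}$ iterations.

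Such $q_n$ are produced by Rauzy--Veech renormalization: at step $n$ one obtains a Rokhlin-tower decomposition of $X$ into $d$ towers of heights $h_{1}^{(n)},\ldots,h_{d}^{(n)}$ over a subinterval $J_{n}\subset X$ whose length decreases exponentially in the Teichm\"uller time. For a suitable choice $q_{n}=h_{i_n}^{(n)}$, the map $T^{q_{n}}$ acts as a translation of magnitude at most $|J_{n}|$ on the levels of the $i_n$-th tower, and since $T$ is a piecewise isometry the errors propagate at most linearly, yielding $|T^{jq_n}x-x|\leq j|J_n|$ off a set of measure $O(jd\,|J_n|)$. Combining these two bounds,
\[
\sum_{|j|\leq q_n^\delta}\|f\circ T^{jq_{n}}-f\|_{L^2}^{2}\;\ll_{f}\;q_{n}^{3\delta}|J_{n}|^{2}+q_{n}^{2\delta}|J_{n}|.
\]

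The crux is therefore to exhibit, for almost every IET, a subsequence of renormalization times along which $|J_{n}|\leq q_{n}^{-1-3\delta}$ for some fixed $\delta>0$, forcing the right-hand side above to vanish. For $d=2$ this is immediate from $\|q_n\alpha\|\leq 1/q_{n+1}$ together with almost-sure lower bounds on the growth of continued-fraction denominators. For $d\geq 3$ it follows from the positivity of the second Lyapunov exponent of the Kontsevich--Zorich cocycle (Veech, Forni, Avila--Viana), which supplies a polynomial spectral gap between the maximal and minimal tower heights along a subsequence of renormalization times. Securing this quantitative control is the main obstacle; once it is in hand, PR rigidity holds, and Corollary~\ref{c:rig1a} follows directly from Theorem~\ref{thm:main}.
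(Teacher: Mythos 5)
Your reduction to the uniquely ergodic case via Masur--Veech and to Lipschitz test functions is fine, but the crux of your argument --- establishing PR rigidity for almost every IET with $d\geq 3$ --- is not carried out, and the quantitative requirement you formulate is internally inconsistent. Since $\sum_i h_i^{(n)}\,|I_i^{(n)}|=1$, one always has $|J_n|\cdot\max_i h_i^{(n)}\geq 1$; so demanding $|J_n|\leq q_n^{-1-3\delta}$ with $q_n=h_{i_n}^{(n)}$ forces the $i_n$-th tower to have measure $h_{i_n}^{(n)}|I_{i_n}^{(n)}|\leq q_n|J_n|\leq q_n^{-3\delta}\to 0$. But $T^{q_n}$ is only controlled (as a translation by at most $|J_n|$) on that one tower; on the other towers, whose return times are the other heights $h_j^{(n)}$, the map $T^{q_n}$ need not be close to the identity at all. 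Hence your displayed bound for $\sum_{|j|\leq q_n^{\delta}}\|f\circ T^{jq_n}-f\|_{L^2}^2$ omits a term of size $\|f\|_\infty^2\cdot q_n^{\delta}\cdot(1-q_n|I_{i_n}^{(n)}|)$, which under your hypotheses does not tend to zero. To make the scheme work one would instead need a single tower carrying all but $o(q_n^{-\delta})$ of the measure along a subsequence of renormalization times --- a quantitative Borel--Cantelli-type statement for the Rauzy--Veech induction that is genuinely nontrivial and does not follow from positivity or simplicity of Kontsevich--Zorich exponents (which, if anything, say that generically all tower heights grow at the same exponential rate, so no polynomial gap between them is available). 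You flag this as ``the main obstacle,'' and indeed it is the entire content of the statement; as written the proof is incomplete.

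The paper takes a completely different and much softer route, via BPV rather than PR rigidity: it invokes Chaika's theorem (Theorem~\ref{thm:ch}) that for any set $E\subset\N$ of lower density at least $a(\epsilon)$, a set of IETs of measure $\geq 1-\epsilon$ have a rigidity sequence contained in $E$. Taking $E=\mathcal{D}_{j_n}$ with $d(\mathcal{D}_{j_n})\geq a(1/n)$ (possible by Corollary~\ref{cor:densone}) produces, for each $n$, a full-measure-up-to-$1/n$ set of IETs satisfying BPV rigidity, and intersecting the exceptional sets gives the result from Theorem~\ref{thm:main}. This avoids all quantitative renormalization estimates: BPV rigidity requires no rate whatsoever, only an arithmetic constraint on the rigidity times, which Chaika's density statement supplies for free. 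If you want to salvage your approach, you would need to either prove the quantitative tower estimate above or switch to the BPV mechanism.
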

Let us recall that recently Chaika and Eskin \cite{Ch-Es} proved Corollary~\ref{c:rig1a} {\bf for $d=3$} (see also \cite{Bo}, \cite{Fe-Ma}, \cite{Kar} for M\"obius disjointness for some subclasses of interval exchange transformations with~3 intervals).

By $\T$ we denote the additive circle which we identify with $[0,1)$ with addition mod~1, and
we consider now {\em Anzai skew products} on $\T^2$, that is, systems of the form
$$
T_\phi(x,y):=(x+\alpha,y+\phi(x)),
$$
for irrational $\alpha\in \T$ and $\phi:\T\to\T$ a continuous function on the circle.
If $\phi$ is of zero topological degree, that is, $\phi(t)=\widetilde{\phi}(t)~{\rm mod~1}$, where $\widetilde{\phi}:\R\to\R$ is absolutely continuous and 1-periodic, then
$T_\phi^{q_n} \rightarrow \text{Id}$ {\bf uniformly} along the sequence $\{q_{n}\}$ of best rational approximations to $\alpha$ \cite{He}.
In \cite{Wa}, Wang proved that all such analytic Anzai skew products are M\"obius disjoint (for an earlier result, see Liu-Sarnak \cite{Li-Sa}), see also~\cite{Hu-Wa-Ye} for an extension of this result to the $C^\infty$ case.
 In Section \ref{sub:anz} we will also prove the following corollary:

\begin{Cor}\label{c:rig3} For every $\epsilon>0$, each irrational $\alpha$ and $\phi$ of zero topological degree, $\int_0^1\widetilde{\phi}\,dt=0$ and of class $C^{2+\epsilon}$, the corresponding Anzai skew product $T_\phi$ is M\"obius disjoint.\end{Cor}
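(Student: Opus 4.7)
My plan is to invoke Theorem~\ref{thm:main}, so it suffices to verify that $(\T^2,T_\phi)$ is \emph{good}. I will establish PR rigidity simultaneously for every $\nu\in M(T_\phi)$, along the common sequence $\{q_n\}_{n\geq1}$ of denominators of the best rational approximants to $\alpha$. For the linearly dense subset $\mathcal{F}\subset C(\T^2)$ I take the characters $\chi_{m,\ell}(x,y):=e^{2\pi i(mx+\ell y)}$, $(m,\ell)\in\Z^2$, whose linear span is dense in $C(\T^2)$ by Stone--Weierstrass. Since the resulting estimates on $\|\chi_{m,\ell}\circ T_\phi^{jq_n}-\chi_{m,\ell}\|_{L^2(\nu)}$ will be pointwise, they hold uniformly in $\nu$.

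Write $S_N\phi(x):=\sum_{k=0}^{N-1}\phi(x+k\alpha)$, so that $T_\phi^{jq_n}(x,y)=(x+jq_n\alpha,\,y+S_{jq_n}\phi(x))$ and
\[
\chi_{m,\ell}\circ T_\phi^{jq_n}-\chi_{m,\ell}=\chi_{m,\ell}\cdot\bigl(e^{2\pi i(jmq_n\alpha+\ell\,S_{jq_n}\phi(x))}-1\bigr).
\]
Combining $|e^{i\theta}-1|\leq|\theta|$, the standard approximation bound $\|q_n\alpha\|_{\T}\leq 1/q_{n+1}$, and the telescoping identity $S_{jq_n}\phi=\sum_{i=0}^{|j|-1}S_{q_n}\phi(\cdot+iq_n\alpha)$, we obtain the pointwise (hence $L^2(\nu)$-uniform) estimate
\[
|\chi_{m,\ell}\circ T_\phi^{jq_n}-\chi_{m,\ell}|\leq 2\pi\bigl(|jm|/q_{n+1}+|j\ell|\cdot\|S_{q_n}\phi\|_\infty\bigr).
\]

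The heart of the matter is a polynomial Birkhoff-sum decay
\[
\|S_{q_n}\phi\|_\infty\leq C_\phi\,q_n^{-\gamma}\qquad\text{for some }\gamma=\gamma(\epsilon)>0,
\]
valid for \emph{every} irrational $\alpha$, which is where the $C^{2+\epsilon}$ hypothesis is used essentially. Granted this, for $|j|\leq q_n^\delta$ the previous bound gives $\|\chi_{m,\ell}\circ T_\phi^{jq_n}-\chi_{m,\ell}\|_{L^2(\nu)}^2=O_{m,\ell}(q_n^{2\delta-2\gamma'})$ with $\gamma':=\min(\gamma,1)$, and summing over $|j|\leq q_n^\delta$ produces $O(q_n^{3\delta-2\gamma'})$, which tends to $0$ as soon as $\delta<\tfrac{2}{3}\gamma'$. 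Choosing, for instance, $\delta=\gamma'/2$ verifies~\eqref{eq:conv0} for every $\chi_{m,\ell}\in\mathcal{F}$ and every $\nu\in M(T_\phi)$, so $T_\phi$ is good and Theorem~\ref{thm:main} yields the corollary.

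The main obstacle is therefore the polynomial rate $\|S_{q_n}\phi\|_\infty=O(q_n^{-\gamma})$: Herman's theorem quoted in the introduction only produces $\|S_{q_n}\phi\|_\infty\to0$, Denjoy--Koksma only gives boundedness, and no Diophantine assumption on $\alpha$ is available. The natural route is a Fourier split of $\phi=\sum_k\hat\phi(k)e^{2\pi ik\cdot}$ at a threshold $N=q_n^\sigma$: since $|\hat\phi(k)|\ll|k|^{-2-\epsilon}$, the tail $|k|>N$ contributes $O(q_n\cdot N^{-1-\epsilon})=O(q_n^{1-\sigma(1+\epsilon)})$ via the trivial estimate $|S_{q_n}(e^{2\pi ik\cdot})|\leq q_n$, while the low modes $|k|\leq N$ are handled via the closed form
\[
S_{q_n}(e^{2\pi ik\cdot})(x)=e^{2\pi ikx}\cdot(e^{2\pi ikq_n\alpha}-1)/(e^{2\pi ik\alpha}-1),
\]
bounded by $\|kq_n\alpha\|_{\T}/(2\|k\alpha\|_{\T})\leq(|k|/q_{n+1})/(2\|k\alpha\|_{\T})$, the three-distance theorem forbidding $\|k\alpha\|_{\T}$ from being too small relative to the size of $k$. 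Choosing $\sigma\in(1/(1+\epsilon),1)$ balances the two contributions and yields $\gamma=\gamma(\epsilon)>0$, as required.
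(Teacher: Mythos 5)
Your overall strategy (verify PR rigidity for every invariant measure via a pointwise estimate on a linearly dense family, then invoke Theorem~\ref{thm:main}) is the same as the paper's, which uses Lipschitz functions and the topological PR condition~\eqref{topPR} instead of characters; that part of your reduction is fine. The gap is in the step you yourself call ``the heart of the matter'': the claimed bound $\|S_{q_n}\phi\|_\infty\le C_\phi\,q_n^{-\gamma}$ for \emph{every} irrational $\alpha$ and along the \emph{full} sequence of denominators is false. In your low-mode sum the dangerous frequencies are $k=q_j$ with $j<n$, for which $\|k\alpha\|\asymp 1/q_{j+1}$; the three-distance theorem does \emph{not} prevent $\|k\alpha\|$ from being tiny relative to $k$, since for Liouville $\alpha$ one can have $q_n\ge\exp(q_{n-1})$ while $k=q_{n-1}$ lies far below any threshold $q_n^{\sigma}$. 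Concretely, take $\alpha$ with $a_n$ enormous and $a_{n+1}=1$, and $\phi\in C^{2+\epsilon}$ with $|\hat\phi(q_{n-1})|\asymp q_{n-1}^{-2-\epsilon}$: the single mode $k=q_{n-1}$ contributes $|\hat\phi(q_{n-1})|\,\|q_{n-1}q_n\alpha\|/\|q_{n-1}\alpha\|\asymp q_{n-1}^{-1-\epsilon}\,q_n/q_{n+1}\asymp q_{n-1}^{-1-\epsilon}$, which can be as large as $(\log q_n)^{-1-\epsilon}$ --- not polynomially small in $q_n$. So no choice of $\sigma$ balances your two contributions for all $n$.

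Since PR rigidity only requires \emph{some} rigidity sequence, the fix is to pass to a subsequence, and this is exactly what the paper's Lemma~\ref{lem:srig} does via a dichotomy on the growth of the denominators: either $q_{n+1}>q_n^{1+\epsilon/4}$ infinitely often (Case A), in which case the crude bound $\sup_{|k|<q_n}\|k\alpha\|^{-1}<2q_n$ already makes the low-mode sum $\ll q_n/q_{n+1}\le q_n^{-\epsilon/4}$ along that subsequence; or eventually $q_{n+1}\le q_n^{1+\epsilon/4}$ (Case B), in which case the denominators grow tamely enough that one can split the low-mode sum at an intermediate denominator $q_k\in[q_n^{1/4},q_n^{1/2}]$ and obtain $q_k/q_{n+1}+q_k^{-\epsilon}\ll q_n^{-\epsilon/4}$ for all large $n$. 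You need this (or an equivalent device) to close the argument; with it, the rest of your proposal --- the telescoping of $S_{jq_n}\phi$, the pointwise bound uniform in $\nu$, and the summation over $|j|\le q_n^{\delta}$ --- goes through and matches the paper.
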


Another important class of systems with the property that $T^{q_{n}} \rightarrow \text{Id}$ (uniformly) along some subsequence $\{q_n\}$ is given by smooth (area preserving) flows on $\T^2$. In fact, whenever such a flow has no fixed points, it is rigid.
In particular, the recent result of \cite{Hu-Xu} that such $C^\infty$ flows are M\"obius disjoint is a particular case of our main result whenever $\{q_n\}$ is of bounded prime volume. If we drop the assumption of the boundedness of the prime volume of $\{q_n\}$ then still M\"obius disjointness will hold whenever the roof function is smooth enough, see Corollary~\ref{cor:spflow}. Finally, in Corollary~\ref{rokext}, we prove new M\"obius disjointness results for so called Rokhlin extensions.

Furthermore, Theorem \ref{th:main1}, which also holds for the Liouville function $\lio$ instead of $\mob$, has some purely arithmetic consequences for the correlations of the Liouville function. Matom\"aki and the third author showed in \cite{Ma-Ra} that
\beq\label{matrad1}
\limsup_{N\to\infty}\left|\frac1N\sum_{n\leq N}\lio(n)\lio (n+h)\right|<1-\eta(h)\eeq
for some $\eta(h)>0$ and all $h\geq1$. We can weaken the dependence of $\eta$ on $h$, along a subsequence of any given subsequence of $N$.

\begin{Cor}\label{c:lio1} For each sequence  $\{N_n\}_{n \geq 1}\subset\N$ there exists a subsequence $\{N_{n_k}\}$ such that for each $j\geq1$ and some $\eta>0$ (depending on the subsequence and~$j$), we have:
$$
\lim_{k\to\infty}\frac1{N_{n_k}}
\sum_{n\leq N_{n_k}}\lio(n)\lio(n+h)
<1-\eta,$$
for all $h\in\mathcal{D}_j$.
 \end{Cor}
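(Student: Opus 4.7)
The plan is to transfer Corollary~\ref{c:lio1} into a dynamical statement on the orbit closure of $\lio$ inside $X=\{-1,1\}^{\N}$ (with $S$ the shift), and then derive a contradiction from the Liouville analogue of Theorem~\ref{th:main1} whenever the desired upper bound fails. First, by a diagonal extraction over $h\in\N$ together with weak-$*$ compactness, I pass to a subsequence of $\{N_n\}$, still denoted $\{N_{n_k}\}$, along which the correlation
\[
C(h):=\lim_{k\to\infty}\frac{1}{N_{n_k}}\sum_{n\le N_{n_k}}\lio(n)\lio(n+h)
\]
exists for every $h\ge 0$, and the empirical measures $\frac{1}{N_{n_k}}\sum_{n\le N_{n_k}}\delta_{S^n\lio}$ converge weak-$*$ to a shift-invariant probability measure $\kappa$ on $X$. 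Writing $\pi_h(\omega)=\omega_h$ for the coordinate functionals, we have $C(h)=\int\pi_0\pi_h\,d\kappa$ and $\lio$ is quasi-generic for $\kappa$ along $\{N_{n_k}\}$.

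Fix $j\ge 1$; the goal is to show $\sup_{h\in\mathcal{D}_j}C(h)<1$. By \eqref{matrad1}, for each individual $h$ we have $C(h)\le 1-\eta(h)$, which bounds the sup strictly away from $1$ on any finite range. It therefore suffices to rule out a sequence $h_i\in\mathcal{D}_j$ with $h_i\to\infty$ and $C(h_i)\to 1$. Assume, toward contradiction, that such $h_i$ exist. Using shift-invariance of $\kappa$ and $\|\pi_h\|_{L^2(\kappa)}=1$, for every fixed $h$,
\[
\|\pi_h\circ S^{h_i}-\pi_h\|_{L^2(\kappa)}^2=\|\pi_{h_i+h}-\pi_h\|_{L^2(\kappa)}^2=2-2C(h_i)\longrightarrow 0.
\]
Since cylinder functions are uniformly dense in $C(X)$, hence in $L^2(\kappa)$, a standard approximation yields $g\circ S^{h_i}\to g$ in $L^2(\kappa)$ for every $g\in L^2(\kappa)$; i.e.\ $(X,\mathcal{B},\kappa,S)$ is rigid along $\{h_i\}\subset\mathcal{D}_j$, so $\kappa$ satisfies BPV rigidity.

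I then apply the Liouville version of Theorem~\ref{th:main1} to the system $(X,\mathcal{B},\kappa,S)$ and the quasi-generic point $\lio$ to conclude
\[
\lim_{k\to\infty}\frac{1}{N_{n_k}}\sum_{n\le N_{n_k}}f(S^n\lio)\,\lio(n)=0\quad\text{for every }f\in C(X).
\]
Testing with $f=\pi_0$ forces the left-hand side to equal $1$, a contradiction. Hence $\sup_{h\in\mathcal{D}_j}C(h)<1$, which gives the required $\eta=\eta(j):=1-\sup_{h\in\mathcal{D}_j}C(h)>0$.

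The main obstacle is the last step: it relies on Theorem~\ref{th:main1} operating measure-by-measure, i.e.\ delivering convergence along the specific subsequence $\{N_{n_k}\}$ for which $\lio$ is quasi-generic for the single BPV-rigid measure $\kappa$, rather than under the global hypothesis that \emph{every} invariant measure of the orbit closure of $\lio$ is good. Once this measure-theoretic formulation is in hand, the remaining ingredients---diagonal extraction, the Cauchy--Schwarz identity yielding rigidity, and the density of cylinder functions---are routine.
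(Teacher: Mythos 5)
Your argument is correct and follows essentially the same route as the paper: the paper likewise passes to a subsequence along which $\lio$ is quasi-generic, observes that correlations tending to $1$ along $h\in\mathcal{D}_j$ would make the resulting Furstenberg system BPV rigid along a sequence in $\mathcal{D}_j$, derives a contradiction with the non-self-orthogonality of $\lio$ via Theorem~\ref{th:main1}, and invokes \eqref{matrad1} for the remaining (bounded) range of $h$. The ``measure-by-measure'' version of Theorem~\ref{th:main1} that you flag as the main obstacle is exactly what the paper supplies in Remark~\ref{r:local} and Corollary~\ref{c:mob1}, so your only deviation is cosmetic: you upgrade rigidity of the coordinate functions to rigidity of all of $L^2(\kappa)$ by cylinder-function approximation, whereas the paper only needs rigidity of the single observable $\theta=\pi_0$.
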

The proof of the above corollary uses the notion of {\em Furstenberg systems} associated to $\lio$ and will be given in Section~\ref{sec:mulf}.

We remark that Corollary \ref{c:lio1} is related to a recent result of Tao and Ter\"av\"ainen \cite{T-T} according to which Chowla's conjecture holds for ``most'' subsequences. However, given a particular subsequence $\{N_{n_k}\}$ their result cannot guarantee the existence of even one subsequence of $\{N_{n_k}\}$ along which Chowla would hold. This is not surprising, since such a result would settle Chowla's conjecture. This marks an important but subtle distinction between their result and our Corollary \ref{c:lio1}.

\subsection*{Acknowledgments}

MR acknowledges the partial support of a Sloan fellowship and NSF grant DMS-1902063. ML's research is supported by Narodowe Centrum Nauki grant 2019/33/B/ST1/00364.
We would like to thank the American Institute of Mathematics for hosting a workshop on ``Sarnak's Conjecture'' at which this work was begun. We are grateful to Sacha Mangerel and Joni Ter\"av\"ainen for bringing to our attention an issue in the proof of Theorem~\ref{Thm:ad} in the previous manuscript and to Krzysztof Fr\c{a}czek for discussions on Theorem~\ref{t:main2}.

\section{Completely rigid points in topological dynamics}
To prove Theorem \ref{thm:main} and Theorem \ref{thm:mainE} we will need the notion of completely rigid points, or, taking into account Definition~\ref{def:good}, nicely rigid points.
\begin{Def}\em
  Given a topological dynamical system $(X, T)$ a point $x \in X$ is {\em quasi-generic with respect to a measure $\nu$} if there exists a subsequence $M_k \rightarrow \infty$ along which we have,
  \begin{equation} \label{eq:convergence}
  \frac{1}{M_k} \sum_{m \leq M_k} \delta_{T^m x} \rightarrow \nu
  \end{equation}
  weakly in the space of probability measures on $X$.
\end{Def}
 Notice that by \eqref{eq:convergence}, necessarily,  $\nu\in M(X,T)$.

We are now ready to define {\em completely rigid (nicely rigid) points} in analogy with the {\em completely deterministic points} introduced in \cite{Ka}, \cite{We}.

\begin{Def}\em Given a topological dynamical system $(X, T)$, a point $x \in X$ is {\em completely rigid}  if each measure $\nu \in M(X,T)$ for which $x$ is quasi-generic yields a rigid measure-theoretic dynamical system. If, additionally, each such $\nu$ satisfies either BPV  or PR rigidity condition then $x$ is called {\em nicely rigid}.\end{Def}

Alternatively, we can restate the definition of $x$ being nicely rigid by requiring  that whenever
$$\frac1{M_k}\sum_{m\leq M_k}\delta_{T^mx}\to\nu,$$
then $(X,\mathcal{B}(X),\nu,T)$ satisfies either BPV rigidity or PR rigidity condition. In Section~\ref{s:main1}, we will prove the following more general version of Theorem~\ref{thm:main}.

\begin{Th} \label{th:main1} Assume that $(X,T)$ is a topological dynamical system. Assume that $x\in X$ is nicely rigid. Then
\beq\label{teza}\lim_{N\to\infty}\frac1N\sum_{n\leq N}f(T^nx)\mob(n)=0\eeq
for each $f\in C(X)$.\end{Th}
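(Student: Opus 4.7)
The plan is to argue by contradiction. Suppose \eqref{teza} fails for some $f \in C(X)$; then there exist $\delta > 0$ and $N_k \to \infty$ with $\bigl|\tfrac{1}{N_k}\sum_{n \leq N_k} f(T^n x)\mob(n)\bigr| \geq \delta$. By weak-$*$ compactness of the space of Borel probability measures on $X$, after passing to a subsequence I may assume $\tfrac{1}{N_k}\sum_{n \leq N_k}\delta_{T^n x} \to \nu$ for some $\nu \in M(X,T)$. Then $x$ is quasi-generic for $\nu$, so the ``nicely rigid'' hypothesis forces $(X,\mathcal{B},\nu,T)$ to satisfy BPV rigidity or PR rigidity, and I seek a contradiction under each of these two hypotheses separately.

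The unifying idea is to use rigidity along $\{q_m\}$ to replace $f(T^n x)$ in the $\mob$-correlation sum by a short average $\tfrac{1}{H}\sum_{h=1}^{H} f(T^{n+hq_m} x)$, and then exploit cancellation of $\mob$ along arithmetic progressions of common difference $q_m$. Quasi-genericity of $x$ for $\nu$ converts the $L^2$ norm of the error against the empirical measure into the $L^2(\nu)$ norm of $f-\tfrac{1}{H}\sum_{h=1}^H f\circ T^{hq_m}$, which by the triangle inequality is dominated by $\tfrac{1}{H}\sum_{h=1}^H \|f\circ T^{hq_m}-f\|_{L^2(\nu)}^2$. After this substitution and a Cauchy--Schwarz step, the problem reduces to establishing
\[ \frac{1}{N_k}\sum_{n \leq N_k}\Bigl|\frac{1}{H}\sum_{h=1}^{H} \mob(n + hq_m)\Bigr|^2 \longrightarrow 0 \]
for a suitable choice of $H$ and $q_m$, plus control of the boundary terms coming from the shift of the summation index.

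In the BPV case each $q_m$ lies in a fixed $\mathcal{D}_j$ and hence has bounded prime volume, and Theorem~\ref{Thm:ad} is designed precisely to yield the required mean-square cancellation of $\mob$ on APs of common difference $q_m\in\mathcal{D}_j$. One picks $q_m$ large enough for $\|f\circ T^{hq_m}-f\|_{L^2(\nu)}$ to be small for every $1\le h\le H$, then takes $H=H(q_m)$ large enough to activate Theorem~\ref{Thm:ad}, keeping $H\ll N_k/q_m$ so the shift-of-variables step introduces only negligible boundary terms. The main obstacle here is the simultaneous juggling of the three scales $q_m$, $H$, $N_k$; BPV rigidity provides no information about $T^{jq_m}$ for $j$ much larger than $1$, so uniformity in $h\le H$ is a delicate matter and the argument hinges on matching the growth of $H$ both to what rigidity permits and to what Theorem~\ref{Thm:ad} requires, while staying in $\mathcal{D}_j$ for the right $j$.

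In the PR case, linearity and density of $\mathcal{F}$ in $C(X)$ reduce matters to $f\in\mathcal{F}$. The hypothesis $\sum_{|j|\le q_m^\delta}\|f\circ T^{jq_m}-f\|_{L^2(\nu)}^2\to 0$ lets us take $H$ as large as $q_m^\delta$ in the approximation step, so we are averaging $\mob$ on an AP whose length is a positive power of its modulus. At this polynomial range, the required mean-square cancellation of $\mob$ on APs follows from Matom\"aki--Radziwi\l\l-type estimates without any bounded-prime-volume constraint on $q_m$. The subtle point is that PR rigidity is a summability condition in $j$ rather than a uniform-in-$h$ bound, but this matches exactly the averaged form of the error that enters after Cauchy--Schwarz, and it is what allows the argument to proceed for arbitrary rigidity sequences once enough smoothness of $f$ is available.
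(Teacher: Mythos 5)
Your proposal is correct and follows essentially the same route as the paper: quasi-genericity plus rigidity (with the scale $H=L_n\to\infty$ slowly in the BPV case, $H=q_m^\delta$ in the PR case) reduces everything to mean cancellation of $\mob$ on short arithmetic progressions of modulus $q_m$, which is exactly Theorem~\ref{Thm:ad} via Corollary~\ref{cor1}, with Lemma~\ref{lem:qa} guaranteeing condition~\eqref{eq:cond} in the PR case. The only difference is cosmetic bookkeeping: you smooth $f$ by averaging along the progression and apply Cauchy--Schwarz, whereas the paper partitions $[0,M]$ into blocks of length $L_nq_n$ and shows $f(T^mx)$ is nearly constant on each good residue class within a good block before pulling it out of the sum.
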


It follows that whenever in $(X,T)$ all points  are nicely rigid, the system $(X, T)$  satisfies Sarnak's conjecture (when PR rigidity condition holds, we obtain~\eqref{teza} for $f\in \mathcal{F}$ (and all $x\in X$), but it is not hard to see that~\eqref{teza} is a closed condition in $C(X)$).
Theorem \ref{thm:main} is now an immediate consequence of Theorem~\ref{th:main1}.


\subsection{Consequences for multiplicative functions}\label{sec:mulf}
\begin{Remark}\label{r:local}\em
  A closer look at the proof of Theorem~\ref{th:main1} shows that it has a ``local'' character. Namely, to show~\eqref{teza} for {\bf fixed} $x\in X$ and $f\in C(X)$ we need to know that for each measure $\nu$ for which $x$ is quasi-generic we have, for some $\{q_n\}$, $f\circ T^{q_n}\to f$ in $L^2(X,\nu)$ and either $\{q_n\}\subset\mathcal{D}_j$ (for some $j\geq1$) or (for some $\delta>0$)~\eqref{eq:conv0} holds.
  We will say that in this situation $(x,f)$ satisfies either BPV or PR rigidity, respectively.

Note also that if $\frac1{N_k}\sum_{n\leq N_k}\delta_{T^nx}\to\nu$, and we are interested in establishing \eqref{teza} along $\{N_k\}$, then we need $\{q_n\}$ (depending now on $(f,x)$ and $\{N_k\}$) to satisfy the above conditions. In such a situation we will say that $(x, f, \nu)$ satisfies either BPV or PR rigidity, depending on whether $\{q_{n}\} \subset \mathcal{D}_{j}$ or whether \eqref{eq:conv0} holds.
\end{Remark}

By treating $\mob$ as a point in the space $\{-1,0,1\}^{\Z}$ ($\mob(-n)=\mob(n))$ on which we have the action of the left shift $S$, we obtain the M\"obius subshift $(X_{\mob},S)$, where $X_{\mob}:=\overline{\{S^n\mob:\:n\in\Z\}}$.
On $X_{\mob}$ we consider all measures $\kappa$ for which $\mob $ is quasi-generic (i.e.\ $\kappa=\lim_{k\to\infty}\frac1{N_k}\sum_{n\leq N_k}\delta_{S^n\mob}$). Each measure-theoretic system $(X_{\mob},\mathcal{B}(X_{\mob}), \kappa,S)$ is called a {\em Furstenberg system} of $\mob$.

Consider now the M\"obius subshift  $(X_{\mob},S)$ with the function $\theta:X_{\mob}\to \{0,\pm1\}$, $\theta(y)=y(0)$ for $y\in X_\mu$. Since the set of square-free numbers has positive density, $\mob$ is not orthogonal to itself (so~\eqref{teza} is not satisfied for $\theta\in C(X_{\mob})$ and $\mu\in X_{\mob}$ {\bf along any subsequence of} $\N$), Theorem~\ref{th:main1} implies:

\begin{Cor}\label{c:mob1} For no $\kappa\in M(X_\mu,S)$ for which $\mu$ is quasi-generic, $(\mu,\theta,\kappa)$ satisfy either BPV or PR rigidity. In particular, no Furstenberg system $(X_{\mob},\mathcal{B}(X_{\mob}),\kappa,S)$ of $\mu$ is BPV or PR rigid. The same holds for the Liouville function $\lio$.\end{Cor}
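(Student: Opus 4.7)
The plan is to argue by contradiction using the local version of Theorem~\ref{th:main1} described in Remark~\ref{r:local}. Suppose there exists $\kappa\in M(X_\mob,S)$ for which $\mob$ is quasi-generic, say along some subsequence $\{N_k\}$, and such that the triple $(\mob,\theta,\kappa)$ satisfies either BPV or PR rigidity in the sense of Remark~\ref{r:local}. Then Remark~\ref{r:local} (applied to the point $x=\mob\in X_\mob$, the continuous function $\theta\in C(X_\mob)$ and the quasi-generic measure $\kappa$) yields
\[
\lim_{k\to\infty}\frac{1}{N_k}\sum_{n\leq N_k}\theta(S^n\mob)\,\mob(n)=0.
\]

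Now I would observe that by the very definition of $\theta$ one has $\theta(S^n\mob)=(S^n\mob)(0)=\mob(n)$, so the left-hand side above equals
\[
\lim_{k\to\infty}\frac{1}{N_k}\sum_{n\leq N_k}\mob(n)^2.
\]
But $\mob(n)^2$ is the indicator function of squarefree integers, whose density equals $6/\pi^2>0$. This contradicts the vanishing above, and the contradiction establishes the first assertion. Since any Furstenberg system $(X_\mob,\mathcal{B}(X_\mob),\kappa,S)$ being BPV or PR rigid would force $(\mob,\theta,\kappa)$ to satisfy BPV or PR rigidity in the sense of Remark~\ref{r:local} (the single function $\theta$ can be taken in the linearly dense set $\mathcal{F}$ in the PR case), the second assertion is an immediate consequence.

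For the Liouville function $\lio$ I would reuse the same argument verbatim, replacing $\mob$ by $\lio$ throughout; as noted just before Corollary~\ref{c:lio1}, Theorem~\ref{th:main1} (and thus its local form) also holds for $\lio$. The obstruction to vanishing is even simpler here: $\lio(n)^2=1$ for every $n\geq 1$, so $\frac{1}{N_k}\sum_{n\leq N_k}\lio(n)^2=1$, again giving a contradiction.

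The only delicate point in the writeup is to make sure that what we are using is indeed the pointwise/local form of Theorem~\ref{th:main1} from Remark~\ref{r:local}: we are not asserting that the \emph{topological} system $(X_\mob,S)$ is good (it need not be; indeed the corollary is telling us it is not), only that one particular point $x=\mob$, together with one particular function $\theta$ and one particular quasi-generic measure $\kappa$, would enjoy the rigidity hypothesis. This local hypothesis is exactly what Remark~\ref{r:local} allows us to plug in, and no further input is needed to conclude.
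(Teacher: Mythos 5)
Your proof is correct and is essentially the argument the paper gives inline just before the corollary: apply the local form of Theorem~\ref{th:main1} (Remark~\ref{r:local}) to $(\mob,\theta,\kappa)$ and observe that $\frac1{N_k}\sum_{n\leq N_k}\mob(n)^2$ tends to the positive density of the squarefree numbers (and equals $1$ for $\lio$), contradicting the vanishing in~\eqref{teza}. The only loose phrase is the claim that $\theta$ ``can be taken in'' the linearly dense set $\mathcal{F}$ in the PR case --- $\mathcal{F}$ is imposed by the hypothesis, not chosen by you --- but this is harmless since, as the paper notes, \eqref{teza} is a closed linear condition on $C(X)$ and therefore passes from $\mathcal{F}$ to $\theta$.
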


Let us continue with $\lio$.

Assume that we are given any Furstenberg system $(X_{\lio},\mathcal{B}(X_{\lio}),\kappa,S)$ of $\lio$, that is, assume that $\frac1{N_k}\sum_{n\leq N_k}\delta_{S^n\lio}\to\kappa$. Then
$$
\lim_{k\to\infty}\frac1{N_k}\sum_{n\leq N_k}\lio(n)\lio(n+h)=
\lim_{k\to\infty}\frac1{N_k}\sum_{n\leq N_k}(\theta\cdot \theta\circ S^h)(S^n\lio)=$$$$\int \theta\cdot\theta\circ S^h\,d\kappa=\widehat\sigma_\theta[h]$$
for each $h\geq1$  ($\widehat\sigma_\theta(h)$ stands for the $h$th Fourier coefficient of the spectral measure $\sigma_{\theta}$ of $\theta\in L^2(X_{\lio},\kappa)$). Note that $\int_{X_{\lio}}\theta^2\,d\kappa=1$, so if   $\widehat{\sigma}_\theta[q_n]\to1$ along a subsequence $\{q_n\}$,  then $\theta$ would be rigid.
Since Corollary~\ref{c:mob1} holds, we can now prove Corollary~\ref{c:lio1}:
\begin{proof}[Proof of Corollary \ref{c:lio1}]
By the above, we have that there exists $\eta_0>0$ such that
$$
\limsup_{\mathcal{D}_j\ni h\to\infty}\lim_{k\to\infty}\frac1{N_k}\left|\sum_{n\leq N_k}\lio(n)\lio(n+h)\right|<1-\eta_0.$$
To complete the proof, we use \eqref{matrad1} (for ``small'' $h$).
\end{proof}


\section{Properties of M\"obius function and the proof of Theorem~\ref{th:main1}}
\subsection{M\"obius function on short intervals along arithmetic progressions}\label{Newproperty}

We will prove the following result.
\begin{Th}\label{Thm:ad}
  Let $A > 100$ and $\varepsilon \in (0, \tfrac{1}{100})$ be given.
  For all $X > X_0(\varepsilon, A)$, $H > H_0(\varepsilon, A)$ and $q \leq (\log X)^{A}$ such that
  \begin{equation} \label{eq:cond}
  \sum_{p | q} \frac{1}{p} \leq (1 - \varepsilon) \sum_{p \leq H} \frac{1}{p},
  \end{equation}
  we have
  $$
  \frac{1}{q X H} \sum_{a < q} \int_{X}^{2X} \Big | \sum_{\substack{x \leq n \leq x + q H \\ n \equiv a \pmod{q}}} \mu(n) \Big | dx \leq \varepsilon.
  $$
\end{Th}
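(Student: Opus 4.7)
The strategy is to transplant the Matomäki--Radziwi\l\l{} short-intervals technique for $\mu$ into the setting of short progressions, using Dirichlet characters modulo $q$ to detect the congruence $n\equiv a\,(q)$. First, reduce to the case $\gcd(a,q)=1$: if $d:=\gcd(a,q)$ is not squarefree, the inner sum vanishes identically; if $d$ is squarefree, the substitution $n=dn'$ with $\gcd(n',d)=1$, $a=da'$, $q=dq'$ converts the inner sum into one of the same shape for parameters $(X/d,qH/d,q',a')$ with $\gcd(a',q')=1$, and hypothesis~\eqref{eq:cond} survives the passage to $q'$.

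For $\gcd(a,q)=1$, Cauchy--Schwarz in $a$ together with Parseval's identity on the character group gives
$$\sum_{\gcd(a,q)=1}\Bigl|\sum_{\substack{x<n\leq x+qH\\ n\equiv a\,(q)}}\mu(n)\Bigr|^{2}=\frac{1}{\phi(q)}\sum_{\chi\bmod q}\Bigl|\sum_{x<n\leq x+qH}\mu(n)\chi(n)\Bigr|^{2},$$
so it is enough to bound the mean-square over $x\in[X,2X]$ and $\chi\bmod q$ of the twisted short-interval sum $M_\chi(x):=\sum_{x<n\leq x+qH}\mu(n)\chi(n)$.

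Treat each $M_\chi(x)$ by Ramar\'e's identity restricted to primes coprime to $q$. Set $\mathcal{P}:=\{p\leq H:p\nmid q\}$ and $W:=\sum_{p\in\mathcal{P}}1/p$; hypothesis~\eqref{eq:cond} and Mertens give $W\geq\varepsilon\log\log H+O_\varepsilon(1)\to\infty$. For any $n$ possessing a prime factor in $\mathcal{P}$,
$$\mu(n)\chi(n)\Bigl(\sum_{\substack{p\in\mathcal{P}\\ p\mid n}}1\Bigr)=-\sum_{p\in\mathcal{P}}\chi(p)\,\mu(n/p)\chi(n/p)\,\mathbf{1}_{p\mid n,\,p^{2}\nmid n},$$
so dividing by $W$ recovers $\mu(n)\chi(n)$ up to a Tur\'an--Kubilius error (absorbed by a second-moment bound, since $\sum_{n\leq 2X}(\sum_{p\in\mathcal{P},p\mid n}1-W)^{2}\ll XW$) plus a negligible contribution from $n$ without prime factors in $\mathcal{P}$, of density $\ll\prod_{p\in\mathcal{P}}(1-1/p)\ll(\log H)^{-\varepsilon}$. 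Swapping orders converts $M_\chi(x)$ into a bilinear sum over $p\in\mathcal{P}$ and $m\asymp X/p$, which by Plancherel reduces the mean-square estimate to a bound on $|F_\chi(1+it)|$ for the Dirichlet polynomial $F_\chi(s):=\sum_{n\asymp X}\mu(n)\chi(n)n^{-s}$, factored according to the Ramar\'e decomposition.

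The hard step is to produce, uniformly in $\chi$ modulo $q\leq(\log X)^{A}$, a Hal\'asz-type upper bound for $F_\chi(1+it)$ on a range $t\in[-T,T]$ with $T\asymp X/(qH)$, together with a large-sieve mean-value estimate for the Ramar\'e-factored polynomial that keeps the measure of $t$ on which $|F_\chi(1+it)|$ is not small sufficiently thin. The principal-character contribution is handled as in the original Matomäki--Radziwi\l\l{} argument applied to $\mu\cdot\mathbf{1}_{\gcd(\cdot,q)=1}$, while the potential Siegel-zero obstruction in the non-principal $\chi$ case is absorbed by the allowed range $q\leq(\log X)^{A}$; the condition~\eqref{eq:cond} enters crucially to ensure that $W^{-1}\to 0$ so the Tur\'an--Kubilius error is controllable. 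Combining the Tur\'an--Kubilius error, the density bound $(\log H)^{-\varepsilon}$ for $n$ with no prime factor in $\mathcal{P}$, and the Hal\'asz/large-sieve bound for $F_\chi$ yields the claimed inequality once $H\geq H_0(\varepsilon,A)$ and $X\geq X_0(\varepsilon,A)$.
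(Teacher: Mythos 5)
Your proposal follows essentially the same route as the paper: both reduce to the case $(a,q/d)=1$ via the gcd $d=(a,q)$, detect the progression with Dirichlet characters and Cauchy--Schwarz, and run the Matom\"aki--Radziwi\l\l{} machinery with the primes dividing $q$ removed, using hypothesis \eqref{eq:cond} exactly as you do --- to guarantee that enough primes coprime to $q$ survive for the sieve/Ramar\'e step --- together with hybrid (character-averaged) mean-value and Hal\'asz estimates for $q\leq(\log X)^A$. The paper likewise leaves the hard analytic step as a list of modifications to Proposition 1 of \cite{Ma-Ra} (working with the multi-scale intervals $[P_j,Q_j]$, $Q_1=dH$, $P_1=\log^{2000}Q_1$, rather than your single block of all primes up to $H$), so the two arguments differ only in presentation.
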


One can roughly think of the condition on $q$ as excluding those $q$ that are divisible by primorials such as $\prod_{p \leq H} p$. It is a generic condition as we show in the lemma below.

\begin{Lemma}\label{lem:qa}
  Let $\varepsilon \in (0, \tfrac{1}{100})$ be given. For any $A > 100$,
  \begin{equation*}
\# \Big \{ q \leq X :   \sum_{p | q} \frac{1}{p} \geq ( 1 - \varepsilon) \sum_{p \leq H} \frac{1}{p} \Big \} \ll_{A} X (\log H)^{-A}.
  \end{equation*}
  Furthermore, for all $q$ sufficiently large,
  $$
  \sum_{p | q} \frac{1}{p} \leq \log\log\log q + O(1).
  $$
  Therefore, \eqref{eq:cond} holds for all $H > \exp((\log \log q)^{1 + 2 \varepsilon})$ and $q$ sufficiently large in terms of $\varepsilon^{-1}$.
\end{Lemma}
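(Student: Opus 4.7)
The plan is to handle the three claims in order, with the counting estimate being the main content and the other two assertions elementary deductions from Mertens' theorem. For the first claim, I use an exponential-moment (Chernoff) argument based on the multiplicative function
$$
f_s(q) := \exp\Bigl(s\sum_{p|q}\frac{1}{p}\Bigr) = \prod_{p|q} e^{s/p},
$$
where $s>0$ is a parameter to be chosen. Writing $e^{s/p} = 1 + (e^{s/p}-1)$ and expanding the product gives $f_s(q) = \sum_{d\mid q,\, d\text{ sqfree}}\prod_{p\mid d}(e^{s/p}-1)$, and swapping the order of summation yields
$$
\sum_{q\le X} f_s(q) \;\le\; X\prod_{p}\Bigl(1+\frac{e^{s/p}-1}{p}\Bigr) \;=:\; X\cdot C(s).
$$
The Euler product $C(s)$ converges since $(e^{s/p}-1)/p = O_s(p^{-2})$ for $p > s$. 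By Markov's inequality, the event $\sum_{p|q}1/p \ge (1-\varepsilon)\sum_{p\le H}1/p$ is equivalent to $f_s(q) \ge (\log H)^{s(1-\varepsilon)}\cdot e^{-O(s)}$, where we have used $\sum_{p\le H}1/p = \log\log H + O(1)$ (Mertens). Choosing $s = s(A,\varepsilon)$ so that $s(1-\varepsilon) \ge A$ (e.g.\ $s = 2A$, permissible since $\varepsilon < 1/100$) yields the bound $\ll_A X(\log H)^{-A}$.

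For the second claim, I split the prime divisors of $q$ at the threshold $y := \log q$. Primes $p\mid q$ with $p \le y$ contribute at most $\sum_{p\le y}1/p = \log\log y + O(1) = \log\log\log q + O(1)$ by Mertens. For primes $p\mid q$ with $p > y$, if there are $k$ such primes then their product divides $q$, so $y^k \le q$ and hence $k \le \log q/\log y = \log q/\log\log q$; their contribution is at most $k/y \le 1/\log\log q = O(1)$. Summing gives $\sum_{p\mid q}1/p \le \log\log\log q + O(1)$.

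For the third claim, suppose $H > \exp((\log\log q)^{1+2\varepsilon})$. Then $\sum_{p\le H}1/p \ge (1+2\varepsilon)\log\log\log q + O(1)$ by Mertens, whereas the second claim gives $\sum_{p\mid q}1/p \le \log\log\log q + O(1)$. Since $(1-\varepsilon)(1+2\varepsilon) = 1 + \varepsilon - 2\varepsilon^2 > 1$ for $\varepsilon\in(0,1/2)$, we obtain $\sum_{p\mid q}1/p \le (1-\varepsilon)\sum_{p\le H}1/p$ for $q$ sufficiently large in terms of $\varepsilon$, which is \eqref{eq:cond}. The main obstacle is calibrating the Chernoff parameter $s$ and verifying the convergence of the Euler product $C(s)$; once these are in place, the remaining estimates are routine applications of Mertens' theorem.
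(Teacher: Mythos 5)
Your proof is correct and follows essentially the same route as the paper's: a Chernoff/exponential-moment bound with parameter $s=2A$ combined with Mertens' theorem for the counting estimate, and elementary bounds on the prime divisors of $q$ for the pointwise bound $\sum_{p\mid q}1/p\le\log\log\log q+O(1)$. The only cosmetic differences are that you prove the mean-value bound $\sum_{q\le X}\exp(s\sum_{p\mid q}1/p)\ll_s X$ directly by expanding the Euler product and swapping sums (where the paper cites Tenenbaum), and you bound the second claim by splitting the primes at $\log q$ rather than by counting the number of distinct prime factors; both variants are fine.
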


\begin{proof}
  By Chernoff's bound the number of exceptions is bounded by
  $$
\ll  (\log H)^{-2 A ( 1 - \varepsilon) } \sum_{q \leq X} \exp \Big ( 2 A \sum_{p | q} \frac{1}{p} \Big )  .
  $$
  By \cite[Theorem 5, Ch. III.3]{Te}, the above is
  $$
  \ll (\log H)^{-2 A (1 - \varepsilon)} X \prod_{p \leq X} \Big ( 1 + \frac{e^{2 A / p} - 1}{p} \Big ) \ll_{A} X (\log H)^{-A}.
  $$
  For the second claim, we notice that $q$ has at most $2 \log q$ distinct prime factors, since $\prod_{p < 2 \log q} p > q$ for all $q$ sufficiently large. Therefore,
  $$
  \sum_{p | q} \frac{1}{p} \leq \sum_{p \leq 2 \log q} \frac{1}{p} \leq \log \log \log q + O(1)
  $$
  as claimed.
\end{proof}
The upper bound $\log\log\log q$ is attained for example by primorials $q = \prod_{p \leq K} p$ once $K$ is sufficiently large. The first part of the above lemma implies the following:
\begin{Cor}\label{cor:densone} For $j\in \N$ let $\mathcal{D}_j$ be as in \eqref{eq:cd}. Then
$
\lim_{j\to+\infty} d(\mathcal{D}_j)=1.
$
\end{Cor}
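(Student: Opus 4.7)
The plan is to obtain Corollary~\ref{cor:densone} as an immediate consequence of the quantitative exceptional-set bound established in the first part of Lemma~\ref{lem:qa}. Fix once and for all $\varepsilon\in(0,\tfrac{1}{100})$ and $A>100$; the argument will produce an arbitrarily fast polynomial decay of $d(\mathcal{D}_j^c)$ in~$j$, which is much more than needed.

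Given $j\in\N$, I would use Mertens' theorem $\sum_{p\leq H}1/p=\log\log H+M+o(1)$ to choose $H=H(j)$ as the largest integer for which $(1-\varepsilon)\sum_{p\leq H}1/p\leq j$. Then $H(j)\to\infty$ as $j\to\infty$; explicitly, $\log\log H(j)\sim j/(1-\varepsilon)$. With this choice one has the trivial inclusion
$$
\mathcal{D}_j^c=\Big\{q\in\N:\sum_{p|q}\frac{1}{p}\geq j\Big\}\subseteq \Big\{q\in\N:\sum_{p|q}\frac{1}{p}\geq(1-\varepsilon)\sum_{p\leq H(j)}\frac{1}{p}\Big\},
$$
and so the first part of Lemma~\ref{lem:qa} (applied with $H=H(j)$ and the fixed $\varepsilon$, $A$) gives
$$
\#\bigl(\mathcal{D}_j^c\cap[1,X]\bigr)\ll_A X(\log H(j))^{-A}
$$
uniformly in $X$.

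Dividing by $X$ and letting $X\to\infty$ yields the upper density bound $\overline{d}(\mathcal{D}_j^c)\ll_A(\log H(j))^{-A}$. Since $\log H(j)\to\infty$, the right-hand side tends to $0$ as $j\to\infty$. Therefore $\underline{d}(\mathcal{D}_j)=1-\overline{d}(\mathcal{D}_j^c)\to 1$, and since densities lie in $[0,1]$ this forces $d(\mathcal{D}_j)\to 1$. (The existence of $d(\mathcal{D}_j)$ for each fixed $j$ is the Erd\H{o}s--Wintner-type fact recalled in the remark following~\eqref{ogrmin}; it is not actually required for the limit statement, which is settled by $\underline{d}(\mathcal{D}_j)\to 1$ alone.)

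There is no real obstacle here; the only moving part is calibrating $H(j)$ so that Lemma~\ref{lem:qa} is applicable while $(\log H(j))^{-A}$ still decays, which is handled by Mertens.
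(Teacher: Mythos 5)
Your proposal is correct and follows essentially the same route as the paper: include $\mathcal{D}_j^c$ in the exceptional set of Lemma~\ref{lem:qa} for a suitable $H=H(j)\to\infty$ and let the $(\log H)^{-A}$ bound kill the upper density (the paper simply takes $H=\exp(j)$ rather than your maximal choice, and is in fact sloppier than you about keeping $\varepsilon$ and $A$ inside the lemma's stated ranges). No further comment needed.
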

\begin{proof} Let $A=1$, $\varepsilon=1/200$ and let $H=\exp(j)$. Then
$$
D_j^c\subset \Big \{ q \in \N :   \sum_{p | q} \frac{1}{p} \geq ( 1 - \varepsilon) \sum_{p \leq H} \frac{1}{p} \Big \}.
$$
The statement then immediately follows by Lemma \ref{lem:qa}.
\end{proof}

Before we prove the above theorem, let us state the following immediate corollary.

\begin{Cor}\label{cor1} For each $\vep \in (0, \tfrac{1}{100})$ there exists $L_0$ such that for each $L\geq L_0$ and $q \geq 1$ with
  $$
  \sum_{p | q} \frac{1}{p} \leq (1 - \varepsilon) \sum_{p \leq L} \frac{1}{p},
  $$
  we can find $M_0=M_0(q,L)$ such that, for all $M\geq M_0$, we have
\beq\label{eq:klucz}
\sum_{j=0}^{M/Lq}\sum_{a=0}^{q-1}\left|\sum_{m\in [z+ jLq, z+ (j+1)Lq)\atop m\equiv a\text{ mod }q}\mob(m)\right| \leq \vep M
\eeq
for some $0\leq z<Lq$.
\end{Cor}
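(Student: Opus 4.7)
The plan is to deduce Corollary~\ref{cor1} from Theorem~\ref{Thm:ad} by averaging over the shift parameter $z$ and then applying pigeonhole. Fix $A=100$ and a small constant $\varepsilon'>0$ (to be chosen proportional to $\varepsilon$); set $L_0:=H_0(\varepsilon',A)$ as in Theorem~\ref{Thm:ad}. Since $\sum_{p|q}1/p\le(1-\varepsilon)\sum_{p\le L}1/p\le(1-\varepsilon')\sum_{p\le L}1/p$, condition~\eqref{eq:cond} is satisfied with $H=L$ and error parameter $\varepsilon'$.

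First I would compute the average of the left-hand side of~\eqref{eq:klucz} over $z\in[0,Lq)$. Swapping sum and integral and making the change of variable $y=z+jLq$ in each $j$-th term gives the identity
\[
\frac{1}{Lq}\int_0^{Lq}\sum_{j=0}^{\lfloor M/(Lq)\rfloor}\sum_{a=0}^{q-1}\Big|\sum_{\substack{m\equiv a\text{ mod }q\\ m\in[z+jLq,z+(j+1)Lq)}}\mob(m)\Big|\,dz
=\frac{1}{Lq}\int_0^{M'}\sum_{a=0}^{q-1}\Big|\sum_{\substack{m\equiv a\text{ mod }q\\ m\in[y,y+Lq)}}\mob(m)\Big|\,dy,
\]
where $M':=(\lfloor M/(Lq)\rfloor+1)Lq$ satisfies $M\le M'\le M+Lq$ (so $M'\le 2M$ once $M\ge Lq$).

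Next I would bound the right-hand integral by a dyadic decomposition. Pick an integer $K$ with $2^{K+1}\ge 1/\varepsilon'$, so that the threshold $X_*:=M'/2^{K+1}$ satisfies $X_*\le\varepsilon' M'$. For $M\ge M_0(q,L)$ sufficiently large (roughly $M_0\gtrsim\varepsilon'^{-1}(X_0(\varepsilon',A)+\exp(q^{1/A}))$), $X_*$ also satisfies $X_*\ge X_0(\varepsilon',A)$ and $q\le(\log X_*)^A$, so Theorem~\ref{Thm:ad} applies at each dyadic scale $X=M'/2^{k+1}$ for $k=0,1,\dots,K$ (with $H=L$), giving $\int_X^{2X}\sum_a|\cdots|\,dy\le\varepsilon' qXL$. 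Summing this geometric series (which contributes $\le\varepsilon' qLM'$) and controlling the tail $[0,X_*]$ trivially by $qLX_*\le\varepsilon' qLM'$ yields
\[
\int_0^{M'}\sum_{a=0}^{q-1}\Big|\sum_{\substack{m\equiv a\text{ mod }q\\ m\in[y,y+Lq)}}\mob(m)\Big|\,dy\le 2\varepsilon' qLM'\le 4\varepsilon' qLM.
\]
Choosing $\varepsilon':=\varepsilon/4$ then gives the bound $\varepsilon qLM$. Dividing by $Lq$, the average of the left-hand side of~\eqref{eq:klucz} over $z\in[0,Lq)$ is at most $\varepsilon M$; pigeonhole then produces some $z\in[0,Lq)$ for which~\eqref{eq:klucz} holds.

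The main obstacle is managing the constraints on scales in the dyadic decomposition: the threshold $X_*$ must simultaneously be large enough for Theorem~\ref{Thm:ad} to apply (requiring $q\le(\log X_*)^A$) and small enough compared to $M$ that the tail contribution is absorbed. This forces $M_0$ to grow at least like $\exp(q^{1/A})$ in $q$, which is why $M_0$ must depend on both $q$ and $L$.
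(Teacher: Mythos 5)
Your proposal is correct and follows essentially the same route as the paper: extend Theorem~\ref{Thm:ad} to an integration over $[0,X]$ by dyadic decomposition, observe that averaging over the shift $z\in[0,Lq)$ turns the left-hand side of~\eqref{eq:klucz} into exactly such an integral, and conclude by pigeonhole. The paper's proof is just a terser version of this (summing over integer $z$ rather than integrating), so there is nothing substantive to add.
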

\begin{proof}
  Partitioning into dy-adic intervals we readily see that the conclusion of Theorem \ref{Thm:ad} might have been as-well stated with an integration over $0 \leq x \leq X$ instead of an integration over $X \leq x \leq 2X$. It follows from this that for any $\varepsilon > 0$ and $q \geq 1$ and all $L$ and $M$ sufficiently large in terms of $\varepsilon$ and $q$,
  $$
 \sum_{0 \leq z < q L} \Big ( \sum_{\substack{0 < j < M / (L q)}} \sum_{a < q} \Big | \sum_{\substack{n \in [z + j q L, z + (j + 1) q L] \\ n \equiv a \mod{q}}} \mu(n) \Big | \Big ) \leq \varepsilon  M \cdot L q.
  $$
 The claim now follows by the pigeon-hole principle.
\end{proof}

\begin{proof}[Proof of Theorem \ref{Thm:ad}]
  Grouping terms according to $d = (a,q)$, we have the bound
  $$
  \frac{1}{q X H} \sum_{a < q} \int_{X}^{2X} \Big | \sum_{\substack{x \leq n \leq x + q H \\ n \equiv a \pmod{q}}} \mu(n) \Big | dx \leq
  $$
  $$\frac{1}{q X H} \sum_{d | q} \sum_{\substack{0 \leq a < q / d \\ (a,q/d) = 1}} \int_{X}^{2X} \Big | \sum_{\substack{x / d \leq n \leq x / d + (q / d) H \\ n \equiv a \pmod{q / d}}} \mu(d n) \Big | dx.
  $$
  Let $\varepsilon > 0$.
  We now introduce the set $\mathcal{S}_{d}$ which consists of integers $n$ that have at least one prime divisor in each of the intervals $[P_i, Q_i]$ where $Q_1 = d H$, $P_1 = \log^{2000} Q_1$ and $P_j, Q_j$ are as in \cite[equation (4)]{Ma-Ra}, that is,
  $$
  P_j = \exp(j^{4j} (\log Q_1)^{j - 1} \cdot \log P_1) \ , \ Q_j = \exp(j^{4j + 2} (\log Q_1)^j) \ , \ j \leq J
  $$
  with $J$ the largest index such that $Q_J \leq \exp(\sqrt{\log X})$. We then find that the contribution of the integers not in $\mathcal{S}_d$ is
  $$
  \leq \frac{1}{q X H} \sum_{d | q} \sum_{\substack{0 \leq a < q / d \\ (a, q / d) = 1}} \int_{X}^{2X} \Big ( \sum_{\substack{x / d \leq n \leq x / d + (q/d) H \\ n \equiv a \pmod{q / d} \\ n \not \in \mathcal{S}_{d}}} 1 \Big ) dx.
  $$

  By Fubini's theorem, this is
  \begin{equation} \label{eq:sieveout}
  \leq \frac{1}{X} \sum_{d | q} \sum_{\substack{X / d \leq n \leq 2 X / d \\ (n, q / d) = 1 \\ n \not \in \mathcal{S}_{d}}} 1 .
\end{equation}
Notice that, for any $j \geq 1$,
\begin{equation}\label{eq:saa}
\sum_{\substack{P_j \leq p \leq Q_j \\ p \nmid q}} \frac{1}{p} \geq \sum_{P_j \leq p \leq Q_j} \frac{1}{p} - \sum_{p | q} \frac{1}{p} \geq \frac{\varepsilon}{2} \log\log H + 2 \log j.
\end{equation}
Therefore, by a standard sieve bound,
  $$
  \sum_{\substack{X / d \leq n \leq 2X / d \\ (n, q / d) = 1 \\ n \not \in \mathcal{S}_{d}}} 1 \ll \frac{X}{d} \frac{\varphi(q / d)}{q / d} \sum_{j = 1}^{J} \prod_{\substack{P_j \leq p \leq Q_j \\ p \nmid q}} \Big (1 - \frac{1}{p} \Big ) \ll (\log H)^{-\varepsilon / 2} \frac{\varphi(q / d)}{q} \cdot X \leq $$
  $$\varepsilon \cdot \frac{\varphi(q / d)}{q} \cdot X
  $$
  for all sufficiently large $H$,
  and therefore,~\eqref{eq:sieveout} is $\ll \varepsilon$. Thus, it suffices to bound
  \begin{align*}
    \frac{1}{ q X H} & \sum_{d | q} \sum_{\substack{0 \leq a < q / d \\ (a , q / d) = 1}} \int_{X}^{2X} \Big | \sum_{\substack{x / d \leq n \leq x / d + (q / d) H \\ n \equiv a \pmod{q / d} \\ n \in \mathcal{S}_{d}}} \mu(d n) \Big | dx \\ & \leq \Big ( \frac{1}{q X H} \sum_{d | q} \sum_{\substack{0 \leq a < q / d \\ (a, q / d) = 1}} X \Big )^{1/2} \cdot\\& \Big ( \frac{1}{q X H} \sum_{d | q} \sum_{\substack{0 \leq a < q / d \\ (a , q / d) = 1}} \int_{X}^{2X} \Big | \sum_{\substack{x / d \leq n \leq x / d + (q / d) H \\ n \equiv a \pmod{q / d} \\ n \in \mathcal{S}_{d}}} \mu(d n) \Big |^2 dx \Big )^{1/2}.
  \end{align*}
  Therefore, to conclude, it will suffice to show that
  $$
  \frac{1}{q X H^2} \sum_{d | q} \sum_{\substack{0 \leq a < q / d\\ (a,q/d) = 1}} \int_{X}^{2X} \Big | \sum_{\substack{x / d \leq n \leq x / d + (q / d) H \\ n \equiv a \pmod{q / d}\\ n \in \mathcal{S}_{d}}} \mu(d n) \Big |^2 dx \ll \varepsilon^2.
  $$
  We express the condition $n \equiv a \pmod{q / d}$ using Dirichlet characters. This allows us to re-write the above equation as
  $$
  \frac{1}{q X H} \sum_{d | q} d \cdot \frac{1}{\varphi(q / d)} \sum_{\chi \pmod{q / d}} \int_{X / d}^{2 X / d} \Big | \sum_{\substack{x \leq n \leq x + (q / d) H \\ n \in \mathcal{S}_{d}}} \mu(d n) \chi(n) \Big |^2 dx.
  $$
  We now claim that a variant of \cite[Lemma 14]{Ma-Ra} gives
  \begin{align} \nonumber
  \frac{1}{X / d} & \frac{1}{((q / d) H)^2}\int_{X / d}^{2X / d} \Big | \sum_{\substack{x \leq n \leq x + (q / d) H \\ n \in \mathcal{S}_{d}}} \mu(d n) \chi(n) \Big |^2 dx  \label{eq:firstintegral}  \ll \frac{1}{(\log X)^{100 A}} + \\& +\int_{(\log X)^{100 A}}^{X / (q H)} |D_{d}(1 + it, \chi)|^2 dt + \max_{T > X / (q H)} \frac{X / (q H)}{T} \int_{T}^{2T} |D_{d}(1 + it, \chi)|^2 dt,
  \end{align}
  where
  $$
  D_{d}(1 + it, \chi) := \sum_{\substack{X \leq n \leq 4 X \\ n \in \mathcal{S}_{d}}} \frac{\mu(d n) \chi(n)}{n^{1 + it}}.
  $$
  To obtain this we repeat the proof in  \cite[Lemma 14]{Ma-Ra} but choose $T_0 = (\log X)^{100 A}$, $h_2 = X / (\log X)^{1000 A}$ and notice that the prime number theorem in arithmetic progressions gives
  $$
  \frac{1}{h_2} S_{2}(x) \ll (\log X)^{-10000 A}.
  $$

  We will focus only on bounding the first integral in \eqref{eq:firstintegral} since the contribution of the second integral is handled by simply repeating the argument.
  Therefore, it remains to obtain a bound of the form $\ll \varepsilon^2$ for
  $$
  \frac{1}{q H^2} \sum_{d | q} \frac{1}{\varphi(q / d)} \sum_{\chi \pmod{q / d}} \Big ( \frac{q H}{d} \Big )^2 \int_{(\log X)^{100 A}}^{X / (q H)} |D_{d}(1 + it, \chi)|^2 d.t
  $$
  We re-write this as
  \begin{equation} \label{eq:tobound}
  \sum_{d | q} \frac{q / d}{\varphi(q / d)} \cdot \frac{1}{d} \sum_{\chi \pmod{q / d}} \int_{(\log X)^{100 A}}^{X / (q H)} |D_{d}(1 + it, \chi)|^2 dt.
  \end{equation}

  We claim that a slight modification of \cite[Proposition 1]{Ma-Ra} gives the following ``hybrid variant'' of \cite[Proposition 1]{Ma-Ra},
  \begin{equation} \label{eq:hybrid}
  \sum_{\chi \pmod{q / d}} \int_{(\log X)^{100 A}}^{X / (q H)} |D_{d}(1 + it, \chi)|^2 dt \ll_{\eta} \Big ( \frac{(\log Q_1)^{1/3}}{P_1^{1/6 - \eta}} + \frac{1}{(\log X)^{50 A}} \Big ) \cdot \frac{\varphi(q / d)}{q / d}
  \end{equation}
  for any $\eta > 0$.
  Inserting this ``hybrid variant'' into \eqref{eq:tobound} shows that \eqref{eq:tobound} is
  $$
  \ll \sum_{d | q} \frac{1}{d} \cdot \Big ( (\log d H)^{-2} + \frac{1}{(\log X)^{10 A}} \Big ) \ll_{\varepsilon} \frac{1}{\log H} + \frac{1}{\log^{A} X}
  $$
  (since $\sum_{d | q} d^{-1} \ll \log H$ by the assumption $\sum_{p | q} p^{-1} \leq \log\log H$)
  which is less than $\varepsilon$ for sufficiently large $H$ and $X$.

  Therefore, all that remains to be done is to explain how to obtain \eqref{eq:hybrid}.
  We repeat the argument in \cite[Proposition 1]{Ma-Ra} with the following differences:
  \begin{itemize}
    \item Instead of $Q_{v, H_j}(s)$ we work with
  $$
  Q_{v, H_j}(s, \chi) := \sum_{\substack{P_j \leq q \leq Q_j \\ e^{v / H_j} \leq q \leq e^{(v + 1) / H_j}}} \frac{\chi(q)}{q^s}
  $$
  and, instead of $R_{v, H_j}(s)$, we work with
  $$
  R_{v, H_j}(s, \chi) := \sum_{\substack{X e^{-v / H_j} \leq m \leq 2 X e^{-v / H_j} \\ m \in \mathcal{S}_{j}}} \frac{\mu(d m)\chi(m)}{m^s} \cdot \frac{1}{\# \{ P_j \leq p \leq Q_j : p | m\} + 1},
  $$
  where $\mathcal{S}_{j}$ is the set of those integers which have at least one prime factor in every interval $[P_i, Q_i]$ with $i \neq j$ and $i \leq J$.
\item We define $\mathcal{T}_j$ as the set of those $(t, \chi)$ with $(\log X)^{A} \leq t \leq X / (q H)$ and $\chi$ of modulus $q / d$ for which $|Q_{v,H_j}(1 + it, \chi)| < e^{-\alpha_j v / H_j}$ for all $v$, and $\mathcal{U}$ as the set of those $(t, \chi)$ that do not belong to any of the $\mathcal{T}_1 \cup \ldots \cup \mathcal{T}_{j}$.

\item In Section 8.1, instead of using the standard mean-value theorem, we use the following ``hybrid mean-value theorem'' (see \cite[Theorem 6.4]{Mo} for a proof):
  \begin{equation} \label{eq:hybrid2}
  \sum_{\chi \pmod{q / d}} \int_{0}^{T} \Big | \sum_{X \leq n \leq 4X} \frac{a(n)}{n^{1 + it}} \Big |^2 \ll \Big ( \frac{X}{q H} \cdot \varphi(q / d) + X \Big ) \sum_{\substack{X \leq n \leq 4X \\ (n, q/ d) = 1}} \frac{|a(n)|^2}{n^2}.
\end{equation}
This leads to a bound for $E_1$ which is
$$
E_{1} \ll \frac{(\log Q_1)^{1/3}}{P_1^{1/6 - \eta}} \cdot \Big ( \varphi \Big ( \frac{q}{d} \Big ) \frac{Q_1}{q H} + 1 \Big ) \cdot \frac{\varphi(q / d)}{q / d} \ll \frac{(\log Q_1)^{1/3}}{P_1^{1/6 - \eta}} \cdot \frac{\varphi(q / d)}{q / d}
$$
and explains the choice of $Q_1$.
\item In Section 8.2 we again appeal to the ``hybrid mean-value theorem'' instead of the standard mean-value theorem. In particular, repeating the proof of Lemma 13 with the hybrid mean-value theorem, we obtain
  \begin{align*}
  \sum_{\chi \pmod{q / d}} & \int_{(\log X)^{A}}^{X / (q H)} |Q_{r, H_{j - 1}}(1 + it, \chi)^{\ell_j, r} \cdot  R_{v, H_j}(1 + it, \chi)|^2 dt \\ & \ll \Big ( \frac{\varphi(q / d)}{q H} + Q_{j - 1} \Big ) \exp(2 \ell_{j,r} \log \ell_{j, r}) \cdot \frac{\varphi(q / d)}{q / d}.
  \end{align*}
  This leads to the bound $$E_{j} \ll \frac{1}{j^2} \cdot \frac{1}{P_1} \cdot \frac{\varphi(q / d)}{q / d}.$$
\item In Section 8.3 it suffices to bound
    $$
    \sum_{(t, \chi) \in \mathcal{T}}  |Q_{v, H}(1 + it, \chi) R_{v, H}(1 + it, \chi)|^2
    $$
    with the sum taken over a $1$-well spaced set of tuples $(t, \chi)$ (that is if $(t, \chi) \neq (t', \chi)$ then $|t - t'| \geq 1$) with $\mathcal{T} \subset \mathcal{U}$. By the prime number theorem, we have $Q_{v, H}(1 + it, \chi) \ll (\log X)^{- 50A}$ for any $(t, \chi) \in \mathcal{T}$. Therefore, it remains to bound
    $$
    (\log X)^{-100 A} \sum_{(t, \chi) \in \mathcal{T}} |R_{v, H}(1 + it, \chi)|^2 .
    $$
    By a hybrid version of Halasz's Lemma (see \cite[Theorem 8.3]{Mo}), this is
    $$
    \ll (\log X)^{-100 A} \Big ( X e^{-v / H} + |\mathcal{T}| \cdot \Big ( \frac{q}{d} \cdot \frac{X}{q H} \Big )^{1/2} \Big ) \frac{\log X}{X e^{-v / H}}.
    $$
    Moreover, repeating the proof of \cite[Lemma 8]{Ma-Ra} using the hybrid mean-value \eqref{eq:hybrid2}, we obtain that $|\mathcal{T}| \ll (X / (d H))^{1/2 - \eta + o(1)}$. From this, it follows that the above expression is $\ll (\log X)^{-100 A}$ as needed.
  \end{itemize}
  Summing the error term that we obtain in the above modifications of Section 8.1, 8.2 and 8.3, we obtain the claimed bound \eqref{eq:hybrid}.
\end{proof}

\subsection{A short discussion of BPV and PR rigidity} \label{s:jakstosowac}
In this short section we would like to point out  differences and common features of BPV and PR rigidity, defined in Definition~\ref{def:good}.
BPV rigidity puts arithmetic restrictions on rigidity times but its advantage is that it depends only on the measure-theoretic properties of $\nu$, hence it does not ``see'' topological properties of $(X,T)$. On the other hand, PR rigidity does not impose any arithmetic restrictions on rigidity times, however it takes into account the topological properties of $(X,T)$ expressed by a speed of rigidity for $\nu\in M(X,T)$  and certain continuous functions (namely $f\in\mathcal{F}$).
If in a concrete situation we have a measure-theoretic system and BPV rigidity holds then M\"obius disjointness holds in {\bf all} uniquely ergodic models of the system (hence, in Corollary~\ref{c:rig1a}, we obtain that in {\bf each} uniquely ergodic model of an  IET from the set $C^c$ M\"obius disjointness holds). In contrast PR rigidity applies only in {\bf selected} uniquely ergodic models. However, whenever PR rigidity holds, we will  show in Corollary~\ref{c:momo1} that not only M\"obius disjointness holds but, in fact, we have the strong MOMO property in this model. This, by Theorem~\ref{t:main2}, implies that M\"obius disjointness holds in {\bf all} uniquely ergodic systems of the original measure-theoretic system (hence, also for uniquely ergodic systems considered in Corollary~\ref{c:rig3}, we  have M\"obius disjointness in all other uniquely ergodic models).

To illustrate this, consider an ergodic automorphism with discrete spectrum. By Halmos-von Neumann theorem, it has a uniquely ergodic model which is given as a rotation $Tx=x+a$ on a (metric) compact, abelian group $X$ considered  with Haar measure. Now, $\mathcal{F}$ is given by the group of characters of $X$ each of which is an eigenfunction. If $f\in\mathcal{F}$ corresponds to an eigenvalue $e^{2\pi i\alpha}$ with $\alpha\in[0,1)$  irrational then~\eqref{eq:conv0} is satisfied as it is $\ll \|q_n\alpha\|^2\sum_{j=-q_n^\delta}^{q_n^\delta}j^2\to0$  (for a small $\delta>0$), where $q_n$, $n\geq1$, stands for the sequence of denominators of $\alpha$. If $\alpha$ is rational then consider just multiples of its denominators. It follows that {\bf all} uniquely ergodic models of systems with discrete spectrum are M\"obius disjoint (cf.\ \cite{Hu-Wa-Zh} for the first proof of this result).  Note that BPV rigidity is satisfied for each irrational rotation $Tx=x+\alpha$ on $\T$  (this follows from Vinogradov's theorem on equidistribution of $p\alpha$, $p\in\PP$) but it is not clear how to apply it to any ergodic discrete spectrum automorphism.

  Finally, note that a natural situation in which PR rigidity holds is the case of {\em pointwise rigid} homeomorphisms satisfying the {\em topological PR rigidity} condition: for some $\{q_n\}$, $\delta>0$ and each $x\in X$, we have
\begin{equation}\label{topPR}
\sum_{j=-q_n^{\delta}}^{q_n^\delta}d(T^{jq_n}x,x)\to 0.\end{equation}
Indeed, assuming that the metric $d$ is bounded by~1, for $\mathcal{F}$ we can take the family of Lipschitz continuous functions and for each such function and each $\nu\in M(X,T)$, condition~\eqref{eq:conv0} follows automatically from~\eqref{topPR}. For different concepts of rigidity in topological dynamics, see \cite{Gl-Ma}.

\subsection{Interval exchange transformations}\label{sub:iet}
In this section we will prove Corollary \ref{c:rig1a}.  We first recall the following result of Chaika:
\begin{Th}[Corollary 3, \cite{Chaika}]\label{thm:ch} For every $d \in \N$ and $\epsilon>0$ there exists $0<a(\epsilon)<1$ such that for every $E\subset \N$ satisfying $\underline{d}(E)\geq a(\epsilon)$, there exists a set of $d$ IET's of measure at least $1-\epsilon$ such that each IET in this set has a rigidity sequence in $E$.
\end{Th}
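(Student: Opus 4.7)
\medskip

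\noindent\textbf{Proof plan for the cited theorem of Chaika.}

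My plan is to obtain the claim via Rauzy--Veech--Zorich renormalization combined with ergodicity of the renormalization on the moduli space of $d$-IETs. The starting point is that, for almost every $T$ in a fixed Rauzy class, Rauzy--Veech induction produces an infinite sequence of tower decompositions: at stage $n$, the unit interval splits into Rokhlin towers of heights $h_n^{(1)}(T),\dots,h_n^{(d)}(T)$ over a base of total length $\lambda_n(T)\to 0$. Each tower height is a return time, hence $T^{h_n^{(i)}(T)}$ equals the identity off a set of measure $O(h_n^{(i)}(T)\lambda_n(T))$; in particular the $h_n^{(i)}(T)$ are approximate rigidity times and one gets a genuine $L^{2}$-rigidity sequence by diagonalization. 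This already recovers rigidity for a.e.\ $d$-IET; the point of the theorem is to line up such a sequence \emph{inside} a prescribed set $E$ of lower density $\geq a(\epsilon)$.

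The main step is to enrich the ``rigidity set'' of $T$ so that it is forced to meet $E$ infinitely often. The observation is that any positive integer combination $m=\sum_i k_i h_n^{(i)}(T)$ with $m\lambda_n(T)\to 0$ is still an approximate rigidity time (the orbit of length $m$ of almost every point stays on one tower level). For $T$ whose Rauzy--Veech orbit returns often to a compact set in the moduli space (equivalently, for which the ratios $h_n^{(d)}(T)/h_n^{(1)}(T)$ and $\lambda_n(T)/\lambda_{n+1}(T)$ stay bounded by some constant $C$), these combinations form a set $\mathcal{R}(T)\subset\N$ of positive lower density $\beta(C)$, with $\beta(C)\to 1$ as $C\to\infty$. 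By Veech's theorem (ergodicity of renormalization with respect to an absolutely continuous invariant measure) together with Birkhoff's theorem applied to the hitting-time function of a compact section, the set of $T$ with bounded-distortion renormalization has measure $\geq 1-\epsilon/2$ once the compact section is chosen large enough, and on this set one can take $C=C(\epsilon)$ uniform.

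The final step is a density-sum argument. Set $a(\epsilon):=1-\beta(C(\epsilon))/2$, so that $\underline{d}(E)+\underline{d}(\mathcal{R}(T))>1$ whenever $\underline{d}(E)\geq a(\epsilon)$ and $T$ lies in the good set of measure $\geq 1-\epsilon/2$. A standard inclusion--exclusion on densities then forces $\mathcal{R}(T)\cap E$ to be infinite, and extracting a subsequence $\{q_n\}\subset \mathcal{R}(T)\cap E$ along which the error in approximate rigidity tends to $0$ yields a genuine rigidity sequence contained in $E$, as required. A small further shrinking of the IET set (by $\epsilon/2$) to ensure enough $k_i$-combinations stay below the error thresholds completes the proof.

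The main obstacle is the quantitative step~2: one must convert qualitative ergodicity of the Teichm\"uller flow into a uniform lower bound on $\underline{d}(\mathcal{R}(T))$ on a large-measure subset. The natural route is to work with the accelerated (Zorich) renormalization so that returns to a fixed compact section of the moduli space have integrable return time, apply Birkhoff to the logarithms of the tower-height ratios, and then convert an ergodic average estimate on the moduli space into a density estimate in $\N$ via the Rokhlin-tower structure. Everything else (rigidity from tower heights, Borel--Cantelli extraction of $\{q_n\}$) is routine once this uniform density estimate is in hand.
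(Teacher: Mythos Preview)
The paper does not prove this statement: Theorem~\ref{thm:ch} is quoted verbatim as Corollary~3 of Chaika~\cite{Chaika} and invoked as a black box in the three-line derivation of Corollary~\ref{c:rig1a}. There is therefore no ``paper's own proof'' to compare your outline against.

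As a sketch of Chaika's argument your plan is broadly on the right track---Chaika's proof does go through Rauzy--Veech(--Zorich) induction and the tower structure it produces, and the heart of the matter is indeed a quantitative statement that, on a set of IETs of measure close to~$1$, the collection of candidate rigidity times is dense enough in~$\N$ to be forced to meet any $E$ of sufficiently large lower density. Two points worth flagging if you intend to flesh this out. First, the inclusion--exclusion step is fine as stated (one does have $\underline{d}(A\cap B)\geq \underline{d}(A)+\underline{d}(B)-1$), but you need more than $|\mathcal{R}(T)\cap E|=\infty$: you need elements of $\mathcal{R}(T)\cap E$ with \emph{arbitrarily small} rigidity error, so the density argument must be run at each accuracy level (equivalently, for each tail of the renormalization sequence) rather than once globally. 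Second, the claim that $\beta(C)\to 1$ as $C\to\infty$---i.e.\ that the nonnegative integer combinations of balanced tower heights fill out a set of lower density approaching~$1$---is exactly where the work lies, and your final paragraph correctly isolates this as the nontrivial step requiring control of return-time statistics for the accelerated renormalization.
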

Using the above result we are able to prove Corollary \ref{c:rig1a}:
\begin{proof}[Proof of Corollary \ref{c:rig1a}] Let $\epsilon_n=\frac{1}{n}$ and let $j_n$ be such that
$d(D_{j_n})\geq a(\epsilon_n)$ (such $j_n$ exists by Corollary \ref{cor:densone}). Then, by Theorem \ref{thm:ch},  all but a set $C_n$ of measure $\epsilon_n$ of  $d$ IET's have a rigidity sequence in $\mathcal{D}_{j_n}$ (we can WLOG assume that such IET's are also uniquely ergodic). Hence, BPV rigidity (see Definition~\ref{def:good}) is satisfied for every $d$ IET outside $C_n$. Hence, by Theorem~\ref{thm:main}, every such IET is M{\"o}bius disjoint. It is then enough to notice that the measure of $C=\bigcap_{n}C_n$ is $0$ and every IET outside $C$ is M{\"o}bius disjoint.
\end{proof}

\subsection{Anzai skew products}\label{sub:anz}
In this section we will prove Corollary \ref{c:rig3}. By an abuse of notation, we can treat $\phi$ as a zero mean $C^{2+\epsilon}$ real-valued function on $\T$: $\phi\in C^{2+\epsilon}(\T)$, and therefore
$$
\phi(x)=\sum_{j\in \Z}a_je_j(x)\;\;\text{  where }  \;\;|a_j|\ll j^{-2-\epsilon} \text{ and }\;\; a_0=0.
$$
Let $\{q_n\}$ denote the sequence of denominators of $\alpha$. We have the following lemma:
\begin{Lemma}\label{lem:srig} There exists a subsequence $\{q_{n_m}\}$ such that
$$
\sup_{(x,y)\in \T^2}\sup_{|k|\leq q_{n_m}^{\epsilon^2/4}}d(T_{\alpha,\phi}^{kq_{n_m}}(x,y),(x,y))\ll q_{n_m}^{-\epsilon^2/2}.
$$
\end{Lemma}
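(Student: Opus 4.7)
The proof reduces via the explicit formula $T_{\alpha,\phi}^{N}(x,y)=(x+N\alpha,\,y+\phi^{(N)}(x))$, where $\phi^{(N)}(x):=\sum_{i=0}^{N-1}\phi(x+i\alpha)$, to controlling the base displacement $\|kq_{n_m}\alpha\|$ and the fiber displacement $|\phi^{(kq_{n_m})}(x)|$ separately. For the base, the standard inequality $\|q_n\alpha\|\leq 1/q_{n+1}$ together with $q_{n+1}\geq q_n$ gives $\|kq_{n_m}\alpha\|\leq|k|/q_{n_m+1}\leq q_{n_m}^{\epsilon^2/4-1}$, which for small $\epsilon$ is already much smaller than $q_{n_m}^{-\epsilon^2/2}$. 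So the genuine work is on the fiber.

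Using the cocycle identity $\phi^{(kq_n)}(x)=\sum_{i=0}^{k-1}\phi^{(q_n)}(x+iq_n\alpha)$, one has $|\phi^{(kq_n)}(x)|\leq|k|\,\|\phi^{(q_n)}\|_\infty\leq q_n^{\epsilon^2/4}\,\|\phi^{(q_n)}\|_\infty$, so it suffices to establish $\|\phi^{(q_{n_m})}\|_\infty\ll q_{n_m}^{-3\epsilon^2/4}$ along a suitable subsequence. Substituting the Fourier expansion with $|a_j|\ll|j|^{-2-\epsilon}$ yields
$$\phi^{(q_n)}(x)=\sum_{j\neq 0} a_j\, e^{2\pi ijx}\, D_{q_n}(j\alpha),\qquad D_{q_n}(\theta):=\sum_{m=0}^{q_n-1}e^{2\pi im\theta},$$
and one has the standard estimates $|D_{q_n}(j\alpha)|\leq\min\bigl(q_n,\,\pi|j|\|q_n\alpha\|/(2\|j\alpha\|)\bigr)$, the second valid once $|j|\|q_n\alpha\|\leq 1/2$.

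I would split at $|j|=q_{n+1}/2$. On $|j|\geq q_{n+1}/2$, the trivial bound $|D_{q_n}|\leq q_n$ combined with the decay of $|a_j|$ gives a contribution of order $q_n/q_{n+1}^{1+\epsilon}\leq q_n^{-\epsilon}$. On $|j|<q_{n+1}/2$, refine by the continued-fraction intervals $q_m\leq|j|<q_{m+1}$, using $\|jq_n\alpha\|=|j|\|q_n\alpha\|\leq|j|/q_{n+1}$ and the best-approximation lower bound $\|j\alpha\|\geq 1/(2q_{m+1})$ on each level; the per-level contribution is $\ll q_{m+1}/(q_m^\epsilon q_{n+1})$. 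Summing one obtains
$$\|\phi^{(q_n)}\|_\infty\ll q_n^{-\epsilon}+\frac{1}{q_{n+1}}\sum_{m=0}^{n}\frac{q_{m+1}}{q_m^\epsilon}.$$
The subsequence $\{n_m\}$ is then chosen so the RHS is $\ll q_{n_m}^{-\epsilon}$ (which is stronger than the $q_{n_m}^{-3\epsilon^2/4}$ needed): when $\alpha$ has bounded partial quotients the $q_\ell$ grow geometrically and the sum is dominated by its last term $q_{n+1}/q_n^\epsilon$ for every $n$; for general $\alpha$, extract $\{n_m\}$ along which $a_{n_m+1}$ is large enough that the term $m=n_m$ dominates and the intermediate terms become negligible after division by $q_{n_m+1}$. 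Combining, $|\phi^{(kq_{n_m})}(x)|\ll q_{n_m}^{\epsilon^2/4-\epsilon}\ll q_{n_m}^{-\epsilon^2/2}$, since $\epsilon\geq 3\epsilon^2/4$ for small $\epsilon$.

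The delicate point I expect to be the main obstacle is making the subsequence choice work uniformly in $\alpha$: the sum $\sum_{m\leq n}q_{m+1}/q_m^\epsilon$ need not be dominated by its last term for an arbitrary irrational $\alpha$ with isolated large partial quotients in the middle of the expansion, so the case analysis on the growth of $\{q_n\}$ (bounded partial quotients versus genuinely unbounded ones, with a suitable threshold for the latter) is the step that must be carried out carefully to pick the subsequence along which the desired pointwise Birkhoff-sum estimate holds.
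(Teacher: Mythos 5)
Your overall route is the same as the paper's: reduce to the fiber via the cocycle identity, so that everything rests on a sup-norm bound for the single Birkhoff sum $\phi^{(q_n)}=S_{q_n}(\phi)$; expand in Fourier series and bound $|S_{q_n}(e_j)|$ by $\min\bigl(q_n,\,|j|\,\|q_n\alpha\|/\|j\alpha\|\bigr)$ up to constants; kill the high frequencies by the trivial bound and the decay $|a_j|\ll|j|^{-2-\epsilon}$ (contribution $\ll q_n^{-\epsilon}$); and control the low frequencies by a sum of the shape $q_{n+1}^{-1}\sum_{0<|j|}|j|^{-1-\epsilon}\|j\alpha\|^{-1}$. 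Your refinement of the low-frequency part into continued-fraction blocks $q_m\leq|j|<q_{m+1}$ is finer than what is actually needed, and the per-level bound $q_{m+1}/(q_m^{\epsilon}q_{n+1})$ is correct.

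The gap is exactly the step you flag at the end, and your sketched resolution of it is not quite right. The dichotomy ``bounded partial quotients (so the last term dominates for every $n$)'' versus ``extract $n_m$ with $a_{n_m+1}$ so large that the term $m=n_m$ dominates'' is not exhaustive: an isolated huge partial quotient at some $n_0$ plants a term $q_{n_0+1}/q_{n_0}^{\epsilon}$ in your sum which, for a long stretch of subsequent $n$ with small quotients, is much larger than the current last term $q_{n+1}/q_n^{\epsilon}$, and if such events recur with rapidly increasing relative size there need not be any $n$ at which last-term domination in your literal sense holds, while the quotients may still be unbounded. The clean dichotomy (the one the paper uses) has a quantitative threshold: either (A) $q_{n+1}>q_n^{1+\epsilon/4}$ for infinitely many $n$ --- and at those $n$ you do not need the block decomposition at all, since the crude bound $\sup_{0<|j|<q_n}\|j\alpha\|^{-1}<2q_n$ gives the whole low-frequency sum $\ll q_n/q_{n+1}<q_n^{-\epsilon/4}$ irrespective of the earlier history of the partial quotients; or (B) $q_{n+1}\leq q_n^{1+\epsilon/4}$ for all large $n$, in which case \emph{every} large $n$ works: the controlled gaps guarantee an intermediate denominator $q_k\in[q_n^{1/4},q_n^{1/2}]$, and splitting the low frequencies at $q_k$ (bounding $\|j\alpha\|^{-1}$ by $2q_k$ below and by $2q_{n+1}$ above) gives $\ll q_k/q_{n+1}+q_k^{-\epsilon}\ll q_n^{-\epsilon/4}$. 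With this replacement your argument closes, losing only a constant in the exponent ($q_{n_m}^{-\epsilon/4}$ instead of $q_{n_m}^{-\epsilon}$), which still comfortably beats the $q_{n_m}^{-3\epsilon^2/4}$ you need for $\epsilon$ small.
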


Before we prove the above lemma, let us show how it implies Corollary \ref{c:rig3}.
\begin{proof}[Proof of Corollary \ref{c:rig3}] Let $\{q_{n_m}\}$ denote the sequence from Lemma \ref{lem:srig}. We have
$$\sum_{|k|\leq q_{n_m}^{\epsilon^2/4}}d(T_{\alpha,\phi}^{kq_{n_m}}(x,y),(x,y))\ll q_{n_m}^{-\epsilon^2/4},$$ so condition~\eqref{topPR}  is satisfied and therefore PR rigidity is satisfied for the sequence $\{q_{n_m}\}$, $\delta=\epsilon^2/4$ and the family $\mathcal{F}$ of Lipschitz functions. The statement then follows by Theorem~\ref{thm:main}.
\end{proof}
So, it only remains to prove Lemma~\ref{lem:srig}.
\begin{proof}[Proof of Lemma \ref{lem:srig}]
Notice that $T_{\alpha,\phi}^{kq_n}(x,y)=(x+kq_n\alpha,y+S_{kq_n}(\phi)(x))$, where $S_r(\phi)(x):=\phi(x)+\phi(x+\alpha)+\ldots+\phi(x+(r-1)\alpha)$, for $r\geq 1$.

For every $n\in \N$, we have $S_{q_n}(\phi)(x)=\sum_{j\in \Z}a_j S_{q_n}(e_j)(x)$, where $e_j(x):=e^{2\pi i jx}$. Moreover,
$$
S_{q_n}(e_j)(x)=e_j(x)\frac{1-e_j(q_n\alpha)}{1-e_j(\alpha)}.
$$
For $|j|\geq q_n$, we have
$$
\Big|a_j\frac{1-e_j(q_n\alpha)}{1-e_j(\alpha)}\Big|\leq |j|^{-2-\epsilon}q_n.
$$
Therefore (for every $n\in \N$),
\beq\label{wazna2}
\Big|\sum_{|j|\geq q_n}a_j S_{q_n}(e_j)(x)\Big|\leq q_n\sum_{|j|\geq q_n} |j|^{-2-\epsilon}<q_n^{-\epsilon}.
\eeq
Moreover, notice that for $0<|j|<q_n$, using $\|jq_n\alpha\|<|j|/q_{n+1}$ and $|1-e(|j|)|\sim \|j\alpha\|$, we obtain
$$
\Big|a_j\frac{1-e_j(q_n\alpha)}{1-e_j(\alpha)}\Big|\ll q_{n+1}^{-1}|j|^{-1-\epsilon}\|j\alpha\|^{-1}.
$$
Hence,
\beq\label{wazna3}
\Big|\sum_{|j|< q_n}a_j S_{q_n}(e_j)(x)\Big|\leq q_{n+1}^{-1}\sum_{|j|< q_n} |j|^{-1-\epsilon}\|j\alpha\|^{-1}.
\eeq

Since $\|q_n\alpha\|<q_{n+1}^{-1}<q_n^{-1}$, for $|k|\leq q_n^{\epsilon^2/2}$, we have $\|kq_n\alpha\|\leq q_n^{-1+\epsilon/4}$.
So, by the cocycle identity for $S_r(\phi)$, the lemma follows by showing that there exists a subsequence $\{q_{n_m}\}$ satisfying
\beq\label{wazna}
\sup_{x\in \T}|S_{q_{n_m}}(\phi)(x)|\ll q_{n_m}^{-\epsilon/8}.
\eeq
We consider two cases:\\
\textbf{A.} There exists a subsequence $\{q_{n_m}\}$ such that $q_{n_m+1}>q_{n_m}^{1+\epsilon/4}$. In this case we will show that \eqref{wazna} holds for $\{q_{n_m}\}$.\\
\textbf{B}. For all (sufficiently large) $n$, $q_{n+1}\leq q_n^{1+\epsilon/4}$.

Note, moreover, that $\sup_{|j|<q_n}\|j\alpha\|^{-1}<2q_n$. Using this, if we are in case \textbf{A.}, along the sequence $\{q_{n_m}\}$, \eqref{wazna3} is bounded above by
$$
q_{n_m+1}^{-1}\sum_{|j|< q_{n_m}} |j|^{-1-\epsilon}\|j\alpha\|^{-1}\ll q_{n_m}^{-\epsilon/4},
$$
which, together with \eqref{wazna2}, finishes the proof if \textbf{A} holds.

If \textbf{B.} holds, we will show that \eqref{wazna} holds for $n_m=m$. Let $0<k\leq n$. Notice that  \eqref{wazna3} is bounded above by
$$
q_{n+1}^{-1}\sum_{|j|< q_k} |j|^{-1-\epsilon}\|j\alpha\|^{-1}+q_{n+1}^{-1}\sum_{q_k\leq |j|< q_n} |j|^{-1-\epsilon}\|j\alpha\|^{-1}.
$$
For the first summand above, we use that $\sup_{|j|<q_k}\|j\alpha\|^{-1}<2q_k$ and therefore,
$$
q_{n+1}^{-1}\sum_{|j|< q_k} |j|^{-1-\epsilon}\|j\alpha\|^{-1}\ll \frac{q_k}{q_{n+1}}.
$$
Moreover, since $\sup_{|j|<q_n}\|j\alpha\|^{-1}<2q_n<2q_{n+1}$,
$$
q_{n+1}^{-1}\sum_{q_k\leq |j|< q_n} |j|^{-1-\epsilon}\|j\alpha\|^{-1}\ll \sum_{q_k\leq |j|<q_n} |j|^{-1-\epsilon}\ll q_k^{-\epsilon}.
$$
Putting the above bounds together, we get that \eqref{wazna3} is
$$
\ll \frac{q_k}{q_{n+1}}+q_k^{-\epsilon}.
$$
We now choose $k$ so that $q_k\in [q_n^{1/4}, q_n^{1/2}]$. Notice that this is possible since we are in Case~\textbf{B.} (and so, for every sufficiently large $a$, there exists $k_0\in \N$ such that $q_{k_0}\in[a,a^2]$). Then \eqref{wazna3} is
$\ll q_n^{-\epsilon/4}$.
Using this and \eqref{wazna2}, we get that \eqref{wazna} holds for $n_m=m$.
This finishes the proof.
\end{proof}

\subsection{Smooth flows on $\T^2$ and Rokhlin extensions}
Using the same arguments as in Section~\ref{sub:anz}, we can provide some new instances of M\"obius disjointness. Recall that smooth time changes of the linear flow on $\T^2$ are given by (here $\alpha\in\T$ is irrational) $\frac{dx}{dt}=\frac{\alpha}{F(x,y)}$, $\frac{dy}{dt}=\frac1{F(x,y)}$ for a smooth, positive function $F:\T^2\to\R$. They have a special representation as a (special) flow $T^f=(T^f_t)_{t\in\R}$ over the irrational rotation $Tx=x+\alpha$ and under a smooth function $f:\T\to\R$ satisfying (for simplicity) $\int f\,d Leb=1$ (in fact, each smooth area-preserving flow on $\T^2$ has such a representation whenever the flow has no fixed points). The space $\T^f$ has a natural metric $D$ making it a compact metric space, and
\beq\label{mala}
D(T^f_t(x,y),(x,y))\leq |t|\text{ whenever $t$ is small enough.}\eeq
We have
$$
D(T^f_{q_n}(x,y),(x,y))=
D(T^f_{q_n-S_{q_n}(f)(x)}(T^f_{S_{q_n}(f)(x)}(x,y)),(x,y))\leq
$$$$
D(T^f_{q_n-S_{q_n}(f)(x)}(T^{q_n}x,y),(T^{q_n}x,y))+D((T^{q_n}x,y),
(x,y)).$$
Using \eqref{mala} and \eqref{wazna}, we obtain PR rigidity condition (along the rigidity time $\{q_{n_m}\}$ for the special flow $T^f$). We have proved:

\begin{Cor}\label{cor:spflow}
Time-1 maps of the special flows $T^f$ over an irrational rotation $Tx=x+\alpha$ and under a $C^{2+\epsilon}$ roof functions $f$ are M\"obius disjoint.\end{Cor}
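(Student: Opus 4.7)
The plan is to realize the time-1 map of $T^f$ as a topological dynamical system satisfying the topological PR rigidity condition \eqref{topPR}, which forces PR rigidity for every invariant measure and then lets me invoke Theorem~\ref{thm:main}. The compact phase space $\T^f$ of the special flow, equipped with the natural metric $D$, already comes with the key local estimate \eqref{mala}: translating along the flow by a small amount $t$ moves points by at most $|t|$ in $D$. All the work therefore amounts to controlling how far the time-$q_n$ map displaces points, where $\{q_n\}$ are the denominators of $\alpha$.

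First, I would use the cocycle identity already written down in the excerpt:
\[
D(T^f_{q_n}(x,y),(x,y))\leq D(T^f_{q_n-S_{q_n}(f)(x)}(T^{q_n}x,y),(T^{q_n}x,y))+D((T^{q_n}x,y),(x,y)).
\]
Since $\int f\,d\mathrm{Leb}=1$, the first displacement equals $|S_{q_n}(f-1)(x)|$, which I bound by applying the polynomial Birkhoff-sum estimate \eqref{wazna} from the proof of Lemma~\ref{lem:srig} to $\phi:=f-1\in C^{2+\epsilon}(\T)$ (which has zero mean, as required). Thus along a subsequence $\{q_{n_m}\}$ I obtain $\sup_{x}|S_{q_{n_m}}(f-1)(x)|\ll q_{n_m}^{-\epsilon/8}$. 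The second displacement is bounded by $\|q_{n_m}\alpha\|\leq q_{n_m+1}^{-1}$, which is even smaller.

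Next I would promote the bound to iterates $k q_{n_m}$ with $|k|\leq q_{n_m}^{\epsilon^2/4}$, exactly as in Lemma~\ref{lem:srig}: the iterated Birkhoff sum $S_{kq_{n_m}}(f-1)$ satisfies the same uniform polynomial bound up to a factor $|k|$, and $\|kq_{n_m}\alpha\|\leq q_{n_m}^{-1+\epsilon/4}$ stays tiny. This gives
\[
\sup_{(x,y)\in\T^f}\sup_{|k|\leq q_{n_m}^{\epsilon^2/4}}D\bigl(T^f_{kq_{n_m}}(x,y),(x,y)\bigr)\ll q_{n_m}^{-\epsilon^2/8},
\]
which is precisely the topological PR rigidity condition \eqref{topPR} for the time-1 map $T^f_1$, with $\delta=\epsilon^2/4$. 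By the remark following \eqref{topPR}, this implies PR rigidity for every $\nu\in M(\T^f,T^f_1)$, taking $\mathcal{F}$ to be the (linearly dense) family of Lipschitz functions. Theorem~\ref{thm:main} then yields Möbius disjointness.

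The main obstacle is just the first step: transferring the uniform polynomial Birkhoff-sum bound from the Anzai setting of Section~\ref{sub:anz} to the roof function of the special flow. Once one checks that the argument of Lemma~\ref{lem:srig} relies only on $\phi$ being $C^{2+\epsilon}$ with mean zero, and not on anything specific to skew products on $\T^2$, the bound \eqref{mala} converts Birkhoff-sum smallness to $D$-smallness on $\T^f$ at no cost, and the rest is automatic.
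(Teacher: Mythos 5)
Your proposal is correct and follows essentially the same route as the paper: the paper likewise uses the cocycle identity for the special flow, bounds the residual time $q_n-S_{q_n}(f)(x)=-S_{q_n}(f-1)(x)$ via \eqref{wazna} applied to the mean-zero $C^{2+\epsilon}$ function $f-1$, converts this to a $D$-displacement bound via \eqref{mala}, and concludes PR rigidity along $\{q_{n_m}\}$ so that Theorem~\ref{thm:main} applies. You merely make explicit the steps (the reduction to $f-1$ and the promotion to iterates $kq_{n_m}$ with $|k|\leq q_{n_m}^{\epsilon^2/4}$) that the paper leaves to the phrase ``using the same arguments as in Section~\ref{sub:anz}.''
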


Another class of examples is given by Rokhlin extensions. Here, we fix an irrational rotation $Tx=x+\alpha$ and a smooth zero mean $f:\T\to\R$. Assume that $\mathcal{R}=(R_t)_{t\in \R}$ is a (measurable) flow on $(Y,\mathcal{C},\kappa)$. Then, on the product space $X\times Y$, we have the corresponding Rokhlin extension $T_{f,\mathcal{R}}$ given by
$$
T_{f,\mathcal{R}}(x,y)=(Tx,R_{f(x)}(y)).$$
(Note that Anzai skew products are special cases of this construction in which $Y=\T$ and $\mathcal{R}$ stands for the linear flow on $\T$.)
If $\mathcal{R}$ is a smooth flow on a compact manifold $Y$ with a metric $\rho$ then (for the product metric $D$ on $X\times Y$) we have
$$
D(T^{q_n}_{f,\mathcal{R}}(x,y)(x,y))=
d(T^{q_n}x,x)+\rho(R_{S_{q_n}(f)(x)}(y),y),$$
if the flow $\mathcal{R}$ is Lipschitz continuous, then~\eqref{wazna} will again be satisfied and we obtain the following:

\begin{Cor}\label{rokext} Assume that $f\in C^{2+\epsilon}(\T)$. Then for
{\bf all} Lipschitz continuous $\mathcal{R}$,  Rokhlin extensions $T_{f,\mathcal{R}}$
are M\"obius disjoint.\end{Cor}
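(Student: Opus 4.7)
The plan is to verify the \emph{topological PR rigidity} condition \eqref{topPR} for $T_{f,\mathcal{R}}$ on $X\times Y$, since once this holds, the discussion in Section~\ref{s:jakstosowac} gives PR rigidity for every $\nu\in M(X\times Y, T_{f,\mathcal{R}})$ with $\mathcal{F}$ the (linearly dense) family of Lipschitz continuous functions, and Theorem~\ref{thm:main} finishes the job.

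First, I would iterate the Rokhlin extension to get
\[
T_{f,\mathcal{R}}^{jq_n}(x,y)=\bigl(x+jq_n\alpha,\, R_{S_{jq_n}(f)(x)}(y)\bigr),
\]
so that, under the product metric $D=d+\rho$,
\[
D\bigl(T_{f,\mathcal{R}}^{jq_n}(x,y),(x,y)\bigr)=\|jq_n\alpha\|+\rho\bigl(R_{S_{jq_n}(f)(x)}(y),y\bigr).
\]
Next I would choose the subsequence $\{q_{n_m}\}$ from Lemma~\ref{lem:srig}, so that $\sup_x|S_{q_{n_m}}(f)(x)|\ll q_{n_m}^{-\epsilon/8}$ by \eqref{wazna}. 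The cocycle identity $S_{jq_{n_m}}(f)=\sum_{k=0}^{j-1}S_{q_{n_m}}(f)\circ T^{kq_{n_m}}$ (with the obvious modification for $j<0$) then gives
\[
\sup_x\bigl|S_{jq_{n_m}}(f)(x)\bigr|\ll |j|\,q_{n_m}^{-\epsilon/8}.
\]

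Now fix $\delta:=\epsilon/16$ and restrict to $|j|\le q_{n_m}^{\delta}$. Then $|S_{jq_{n_m}}(f)(x)|\ll q_{n_m}^{\delta-\epsilon/8}=q_{n_m}^{-\epsilon/16}$, so for $m$ large this argument of $R$ is in the small-$t$ regime where the Lipschitz property of $\mathcal{R}$ applies and yields $\rho(R_{S_{jq_{n_m}}(f)(x)}(y),y)\ll |j|\,q_{n_m}^{-\epsilon/8}$ uniformly in $(x,y)$. Combined with the elementary estimate $\|jq_{n_m}\alpha\|\le |j|\,q_{n_m}^{-1}$, summing over $|j|\le q_{n_m}^{\delta}$ gives
\[
\sum_{|j|\le q_{n_m}^{\delta}} D\bigl(T_{f,\mathcal{R}}^{jq_{n_m}}(x,y),(x,y)\bigr)\ll q_{n_m}^{2\delta-\epsilon/8}+q_{n_m}^{2\delta-1}\longrightarrow 0,
\]
uniformly in $(x,y)$. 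This is exactly the topological PR rigidity condition~\eqref{topPR} along $\{q_{n_m}\}$ with exponent $\delta$.

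The only step with any real content is matching the scales: the cocycle only behaves well as long as $|j|\,q_{n_m}^{-\epsilon/8}$ stays within the small-$t$ regime where the global Lipschitz constant of $\mathcal{R}$ controls $\rho(R_t y,y)$, which is what forces $\delta<\epsilon/8$; the choice $\delta=\epsilon/16$ is a safe one. Since $Y$ is compact and $\mathcal{R}$ is Lipschitz, the constants are uniform in $y$, so no measure-theoretic subtlety arises. With topological PR rigidity in hand, every invariant measure $\nu$ automatically satisfies~\eqref{eq:conv0} for every Lipschitz test function, hence $(X\times Y,T_{f,\mathcal{R}})$ is good and M\"obius disjoint by Theorem~\ref{thm:main}.
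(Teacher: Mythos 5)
Your route is exactly the paper's: iterate the extension to get $T_{f,\mathcal{R}}^{jq_n}(x,y)=(x+jq_n\alpha,\,R_{S_{jq_n}(f)(x)}(y))$, control the Birkhoff sums of $f$ via the bound \eqref{wazna} from Lemma~\ref{lem:srig}, transfer this to the fibre displacement using the cocycle identity and the Lipschitz continuity of $\mathcal{R}$, verify the topological PR rigidity condition \eqref{topPR} with $\mathcal{F}$ the Lipschitz functions, and conclude by Theorem~\ref{thm:main}. The paper compresses all of this into two lines, but the content is the same.

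The one step that fails as written is your choice $\delta=\epsilon/16$. Summing the per-term bound $|j|\,q_{n_m}^{-\epsilon/8}$ over $|j|\le q_{n_m}^{\delta}$ gives $\ll q_{n_m}^{2\delta-\epsilon/8}$, and with $\delta=\epsilon/16$ the exponent $2\delta-\epsilon/8$ is exactly $0$, so the sum is only $O(1)$ and your claimed convergence to $0$ is false for this $\delta$ (it is the borderline case, not a ``safe'' one). This is an arithmetic slip rather than a conceptual gap: any $\delta<\epsilon/16$ — say $\delta=\epsilon/32$, or the exponent $\epsilon^2/4$ that the paper implicitly carries over from Lemma~\ref{lem:srig} (which is $<\epsilon/16$ once $\epsilon<1/4$, as one may assume) — makes $2\delta-\epsilon/8<0$, and the rest of your argument then goes through verbatim.
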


Notice that one can take $\mathcal{R}=(h_t)_{t\in\R}$ to be the horocycle flow. In this case the fiber dynamics is very different from the case of Anzai skew-products (mixing with countable Lebesgue spectrum) and, by the above corollary, M\"obius disjointness still holds.

\begin{Remark} \em If the sequence $\{q_n\}$ of denominators of $\alpha$ has bounded prime volume (that BPV rigidity holds) then the above corollary is true for {\bf all continuous} flows $\mathcal{R}$. In fact, the M\"obius disjointness holds for uniquely ergodic models for {\bf all measurable} flows $\mathcal{R}$,  to be compared with a result of \cite{Ku-Le}.\end{Remark}

\subsection{Proof of Theorem~\ref{th:main1}}\label{s:main1}
\begin{proof}
To simplify notation, we will assume that $x$ is generic for $\nu$ which yields a rigid system with a rigidity time $\{q_n\}$ along which  either BPV or PR rigidity holds (see Definition~\ref{def:good}).

Fix a continuous $f : X \rightarrow \R$, where in case PR rigidity is satisfied, we assume additionally that $f\in\mathcal{F}$. Select $L_n\to\infty$ slowly enough to have
$$
\sum_{j=-L_n+1}^{L_n-1}\|f\circ T^{jq_n}-f\|^2_{L^2(\nu)}\to 0.$$
Note that such a sequence obviously exists when BPV rigidity (along $\{q_n\}$) is satisfied, while in the case of PR rigidity, we simply take $L_n=q_n^{\delta}$.
Fix $\vep>0$ (sufficiently small). Then, for $n$ large enough (which we fix)
\beq\label{rig2}
\int_X\sum_{j=-L_n+1}^{L_n-1}\left| f\circ T^{jq_n}-f\right|^2\,d\nu<\vep.\eeq
Since $x$ is generic for $\nu$, by \eqref{rig2}, we obtain
$$
\lim_{M\to\infty}\frac1M\sum_{m\leq M}
\left( \sum_{j=-L_n+1}^{L_n-1}
\left| f(T^{jq_n+m}x)-f(T^mx)\right|^2\right)<\vep.$$
Hence, for some $M_0=M_0(\vep)$ and every $M>M_0$, we have
\beq\label{rig3}
\frac1M\sum_{m\leq M}
\left( \sum_{j=-L_n+1}^{L_n-1}
\left| f(T^{jq_n+m}x)-f(T^mx)\right|^2\right)<\vep.\eeq
We say that $m\leq M$ is {\em good} if
\beq\label{rig01}
\sum_{j=-L_n+1}^{L_n-1}
\left| f(T^{jq_n+m}x)-f(T^mx)\right|^2<\vep^{1/2}.\eeq
Then by Markov's inequality,
\beq\label{rig4}
|\{m\leq M:\: m\text{ is good}\}|>(1-\vep^{1/2})M.\eeq
We also assume that~\eqref{eq:klucz} holds for $M$ (with $\vep$ replaced by $\vep/\|f\|_\infty$). Moreover, as $M$ is arbitrarily large compared to $L_nq_n$, no harm to assume that $z=0$; indeed, otherwise replace intervals $[jL_nq_n,(j+1)L_nq_n)$ by $[z+jL_nq_n,z+(j+1)L_nq_n)$ in the reasoning below.
So now, we write $[0,M]=\bigcup_{j=0}^{M/(L_nq_n)}[jL_nq_n,(j+1)L_nq_n)$
(we do not pay attention to the last interval as $M$ is arbitrarily large with respect to $L_nq_n$). We say that an interval $[jL_nq_n,(j+1)L_nq_n)$ is {\em good} if the number of good $m$ in it is at least $(1-\vep^{1/4})L_nq_n$. It follows that
\beq\label{rig5}
\left|\left\{j\leq \frac M{L_nq_n}:\: [jL_nq_n,(j+1)L_nq_n)\text{ is good}\right\}\right|\geq (1-\vep^{1/4})\frac M{L_nq_n}.\eeq
Indeed, if $K$ denotes the number of good intervals, then by \eqref{rig4},
$$
(1-\vep^{1/2})M\leq \Big(\frac{M}{L_nq_n}-K\Big)(1-\vep^{1/4})L_nq_n+K L_nq_n,
$$
which implies that $(\epsilon^{1/4}-\epsilon^{1/2})\frac{M}{L_nq_n}\leq \epsilon^{1/4}K$ and this proves \eqref{rig5}.

Now, take $[jL_nq_n,(j+1)L_nq_n)$ and assume that it is good. We will consider numbers in this interval mod~$q_n$. We say that $a\in\{0,1,\ldots, q_n-1\}$ is {\rm good} if there exists $m=m_a\in [jL_nq_n,(j+1)L_nq_n)$ which is good and $m\equiv a$ mod~$q_n$. Note that
\beq\label{rig6}
|\{a\in\{0,1,\ldots,q_n-1\}: a\text{ is good}\}|\geq (1-\vep^{1/4})q_n.\eeq
Indeed, if $a$ is bad, then it produces $L_n$ of bad $m$. Note also that whenever $a$ is good (and $m_a\in [jL_nq_n,(j+1)L_nq_n)$ is good with $m_a\equiv a \mod q_n$) then for each $m_1,m_2\in[jL_nq_n,(j+1)L_nq_n)$, with $m_i\equiv a \text{ mod }q_n,\;i=1,2$, by~\eqref{rig01} for $m_a$,  we have
\beq\label{rig7}
|f(T^{m_1}x)-f(T^{m_2}x)|\leq |f(T^{m_1}x)-f(T^{m_a}x)|+|f(T^{m_a}x)-f(T^{m_2}x)|\leq 2 \vep^{1/4}.\eeq
We fix $[jL_nq_n,(j+1)L_nq_n)$ which is good and we evaluate the relevant part of the sum $\frac1M\sum_{m\leq M}f(T^mx)\mob(m)$ using \eqref{rig6} and then \eqref{rig7}:
$$
\left|\sum_{m\in [jL_nq_n,(j+1)L_nq_n)}f(T^mx)\mob(m)\right|\leq$$$$\sum_{a=0\atop a\text{ good}}^{q_n-1}\left|\sum_{m\in [jL_nq_n,(j+1)L_nq_n)\atop m\equiv a\text{ mod }q_n}f(T^mx)\mob(m)\right|+\|f\|_\infty\cdot \vep^{1/4}L_nq_n\leq$$
$$
\|f\|_\infty\sum_{a=0\atop a\text{ good}}^{q_n-1}\left|\sum_{m\in [jL_nq_n,(j+1)L_nq_n)\atop m\equiv a\text{ mod }q_n}\mob(m)\right|+2\vep^{1/4}L_nq_n+\|f\|_\infty\cdot \vep^{1/4}L_nq_n.$$
We have (in view of \eqref{rig5} and the estimate above)
$$
\left|\sum_{m\leq M}f(T^mx)\mob(m)\right|=
\left|\sum_{j=0}^{M/(L_nq_n)}\sum_{m\in [jL_nq_n,(j+1)L_nq_n)}f(T^mx)\mob(m) \right|\leq$$
$$
\sum_{j=0\atop j\text{ good}}^{M/(L_nq_n)}\left|\sum_{m\in [jL_nq_n,(j+1)L_nq_n)}f(T^mx)\mob(m) \right|+\vep^{1/2}\frac M{L_nq_n}\cdot \|f\|_\infty L_nq_n\leq$$$$
\sum_{j=0\atop j\text{ good}}^{M/(L_nq_n)} \left(\|f\|_\infty \sum_{a=0\atop a\text{ good}}^{q_n-1}\left|\sum_{m\in [jL_nq_n,(j+1)L_nq_n)\atop m\equiv a\text{ mod }q_n}\mob(m)\right|+2\vep^{1/4}L_nq_n+\|f\|_\infty\cdot \vep^{1/4}L_nq_n\right) +$$
$$
\vep^{1/2}M\|f\|_\infty=
\|f\|_\infty\sum_{j=0\atop j\text{ good}}^{M/(L_nq_n)} \sum_{a=0\atop a\text{ good}}^{q_n-1}\left|\sum_{m\in [jL_nq_n,(j+1)L_nq_n)\atop m\equiv a\text{ mod }q_n}\mob(m)\right|+ {\rm O}(\vep^{1/7}M).$$
The last expression is ${\rm O}(\vep^{1/10} M)$ since we have assumed~\eqref{eq:klucz} in Corollary~\ref{cor1} to hold for $z=0$. Indeed, when BPV rigidity holds, we can apply Corollary~\ref{cor1}, since $L_n\to\infty$, so $\log\log L_n\to\infty$ and the prime volume of $\{q_n\}$ is bounded, while if PR rigidity is satisfied, we first use the second statement in Lemma~\ref{lem:qa} to see that Corollary~\ref{cor1} is applicable (recall that $L_n=q_n^\delta$). The result follows.
\end{proof}

\begin{Remark}\label{r:dodat}\em Note that the argument used at the beginning of the proof gives the following. If $T$ is uniquely ergodic and
$\sum_{j=-L_n}^{L_n}\|f\circ T^{jq_m}-f\|_{L^2(\nu)}<\vep$ then for all $M\geq M_0$ we have
$$\left\|\frac1M\sum_{m\leq M}\sum_{j=-L_n}^{L_n}|f\circ T^{jq_n+m}-f\circ T^m|\right\|_{C(X)}<\vep.$$\end{Remark}

\section{Systems with countably many ergodic rigid measures. Proof of Theorem~\ref{thm:mainE}} \label{czesc4}

Following \cite{Ab-Ku-Le-Ru1} and \cite{Go-Le-Ru}, a dynamical system $(X,T)$ is said to satisfy the strong MOMO property if for each increasing sequence $(b_k)$ of natural numbers,
$b_{k+1}-b_k\to\infty$, and each $f\in C(X)$, we have
\beq\label{momo1}
\frac1{b_{K}}\sum_{k<K}\left\|\sum_{b_k\leq n<b_{k+1}}\mob(n)f\circ T^{n}\right\|_{C(X)}\to 0\text{ when }K\to\infty.\eeq
Even though the strong MOMO property looks stronger than the original M\"obius disjointness, as proved in \cite{Ab-Ku-Le-Ru1}, Sarnak's conjecture is equivalent to the fact that all zero entropy systems satisfy the strong MOMO property.
Moreover, the strong MOMO property implies uniform (in $x\in X$) convergence in~\eqref{sar1}.

Given $x\in X$, by $V(x)\subset M(X,T)$ we denote the set of  measures for which $x$ is quasi-generic.

In what follows we need an extension of the main result from \cite{Ab-Ku-Le-Ru1}.

Let $\left(\left(Z_i,\cd_i,\kappa_i,R_i\right)\right)_{i\geq1}$, be a sequence of ergodic dynamical systems (this means that we admit a repetition of the same dynamical system infinitely many times). Consider the following three conditions:

\vspace{1ex}

{\bf (PF1)} For each $i\geq1$, there is a topological system $(Y_i,S_i)$ satisfying the strong MOMO property and for some $\mu_i\in M^e(Y_i,S_i)$, the measure-theoretic systems $(Z_i,\cd_i,\kappa_i,R_i)$ and $(Y_i,\cb(Y_i),\mu_i,S_i)$ are (measure-theoretically) isomorphic.

\vspace{1ex}

{\bf (PF2)} For each topological system $(X,T)$ and $x\in X$ satisfying:
\begin{itemize}
\item $V(x)\subset\left\{\sum_{j\geq1}\alpha_{j}\mu_{j}
    :\:\mu_j\in M(X,T),\alpha_{j}\geq0\text{ for }j\geq1,\; \sum_{j\geq1}\alpha_{j}=1\right\}$,
\item the  (measure-theoretic) systems $(X,\cb(X),\mu_{j},T)$ and $(Z_{i_j},\cd_{i_j},\kappa_{i_j},R_{i_j})$ (for some $i_j\geq1$) are measure-theoretically isomorphic for each $j\geq1$,
    \end{itemize} we have that the point $x$ satisfies the $\mob$-Sarnak property: $$\lim_{N\to\infty}\frac1N\sum_{n\leq N}f(T^nx)\mob(n)=0.$$

\vspace{1ex}

{\bf (PF3)} For each topological system $(Y,S)$ for which $M^e(Y,S)=\{\nu_j:\:j\geq1\}$ and, for each $j\geq1$, the (measure-theoretic) systems $(Y,\cb(Y),\nu_j,S)$ and $(Z_{i_j},\cd_{i_j},\kappa_{i_j},R_{i_j})$ (for some $i_j\geq1$)  are measure-theoretically isomorphic, we have that $(Y,S)$ satisfies the strong MOMO property.

\begin{Th}\label{t:main2} Conditions {\bf (PF1)}, {\bf (PF2)} and {\bf (PF3)} are equivalent.\end{Th}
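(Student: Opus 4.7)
The plan is to establish the cyclic chain (PF3) $\Rightarrow$ (PF1) $\Rightarrow$ (PF2) $\Rightarrow$ (PF3).

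The implication (PF3) $\Rightarrow$ (PF1) is essentially a one-line consequence of the Jewett--Krieger theorem. For each ergodic $(Z_i,\cd_i,\kappa_i,R_i)$ choose a uniquely ergodic topological model $(Y_i,S_i)$ with the sole invariant measure $\mu_i$ satisfying $(Y_i,\mu_i,S_i)\cong(Z_i,\kappa_i,R_i)$. Since $M^e(Y_i,S_i)=\{\mu_i\}$ is countable and $\mu_i$ is isomorphic to the prescribed $\kappa_{i}$, the hypothesis of (PF3) is met trivially, hence $(Y_i,S_i)$ enjoys strong MOMO, which is exactly (PF1).

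For (PF1) $\Rightarrow$ (PF2), suppose toward contradiction that for some $f\in C(X)$ and some subsequence $(N_k)$ the $\mob$-twisted averages of $f(T^nx)$ along $(N_k)$ do not tend to zero; refine $(N_k)$ so that $\tfrac{1}{N_k}\sum_{n\le N_k}\delta_{T^nx}\to \nu=\sum_j\alpha_j\mu_j\in V(x)$. By hypothesis each $(X,\mu_j,T)$ is measurably isomorphic to $(Z_{i_j},R_{i_j})$, and by (PF1) the latter has a topological model $(Y_{i_j},S_{i_j})$ satisfying strong MOMO. Fix measurable isomorphisms $\phi_j:X\to Y_{i_j}$, assemble a joining $\lambda$ on $X\times\prod_j Y_{i_j}$ supported on the union of graphs of the $\phi_j$'s (weighted by $\alpha_j$) and projecting to $\nu$ on $X$, then lift $x$ (by a fibered genericity/Rokhlin argument) to a point $\widetilde x$ quasi-generic for $\lambda$ along $(N_k)$. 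The function $f$ pulled back via $\phi_j^{-1}$ is merely measurable on $Y_{i_j}$; approximate it in $L^1(\mu_{i_j})$ by continuous functions on $Y_{i_j}$, apply the strong MOMO property of each $(Y_{i_j},S_{i_j})$ to the approximants, and use the uniformity of strong MOMO in the test function to absorb the approximation error on block-length scales, reaching the desired contradiction.

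For (PF2) $\Rightarrow$ (PF3) we adapt the block-encoding argument of \cite{Ab-Ku-Le-Ru1}. Given $(Y,S)$ with $M^e(Y,S)=\{\nu_j\}$ as in the statement, a sequence $(b_k)$ with $b_{k+1}-b_k\to\infty$, a test function $f\in C(Y)$ and a putative failure of \eqref{momo1}, one constructs an auxiliary compact metric system $(\widehat Y,\widehat S)$ whose orbits simultaneously carry the $S$-orbit of a violating point $y_0\in Y$ and the indicator sequence of the block partition. Any quasi-generic measure on $\widehat Y$ projects to an invariant measure on $Y$, which by ergodic decomposition is a convex combination of the $\nu_j$'s; one arranges the encoding so that the ergodic decomposition of the lifted measure is likewise a convex combination of measures measurably isomorphic to the prescribed $(Z_{i_j},R_{i_j})$'s. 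The hypothesis of (PF2) therefore applies to $(\widehat Y,\widehat S)$, giving $\mob$-Sarnak at the encoded point, which upon projection contradicts the failure of \eqref{momo1}.

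The main obstacle is (PF1) $\Rightarrow$ (PF2): the isomorphisms provided by the hypothesis are only measure-theoretic, so continuous functions on $X$ pull back to discontinuous functions on the $Y_{i_j}$ models, and strong MOMO in its raw form controls only continuous functions. The saving feature is precisely the strictly stronger nature of strong MOMO over pointwise Sarnak --- its uniformity in the continuous test function allows $L^1(\mu_{i_j})$-approximation to pass through the block-sum estimates without degradation. A related delicate point in (PF2) $\Rightarrow$ (PF3) is verifying that the ergodic components of quasi-generic measures on the auxiliary encoded system remain (up to isomorphism) in the prescribed list $(Z_i,R_i)_{i\ge1}$, which is handled by choosing the encoding so that the additional factor is essentially measurable with respect to the original $\sigma$-algebra.
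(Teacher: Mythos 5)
Your cyclic scheme (PF3) $\Rightarrow$ (PF1) $\Rightarrow$ (PF2) $\Rightarrow$ (PF3) matches the paper's, and your first step (Jewett--Krieger) is exactly what the paper does. The sketch of (PF2) $\Rightarrow$ (PF3) is in the right spirit, though you should note that the encoded point is not built from the orbit of a single violating point $y_0$ but from a concatenation of orbit segments of \emph{different} points $y_k$ (one per block, chosen to nearly realize the $C(Y)$-sup in \eqref{momo1}), decorated with cube roots of unity $e_k$ that align the phases so that the block-wise absolute values can be summed into a single twisted ergodic average; the verification that the ergodic components of quasi-generic measures on the orbit closure stay in the prescribed list is done in the paper via the disjointness of ergodic systems from the identity, not by a measurability-of-the-extra-factor argument.

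The genuine gap is in (PF1) $\Rightarrow$ (PF2). You propose to pull $f$ back through the measurable isomorphisms $\phi_j^{-1}$, approximate the resulting measurable function in $L^1(\mu_{i_j})$ by continuous functions, and ``use the uniformity of strong MOMO in the test function to absorb the approximation error.'' This fails for two reasons. First, strong MOMO is uniform in the \emph{starting point} (that is what the $C(X)$-norm inside \eqref{momo1} provides, and it is essential because the relevant starting points $S_{j_k}^{-b_k}y_k$ change from block to block), but it is \emph{not} uniform in the test function: the rate of decay in \eqref{momo1} depends on $f$, so one cannot let the continuous approximant vary and keep a uniform bound. Second, and more fundamentally, $L^1(\mu)$-closeness of a merely measurable function to a continuous one gives no control of their difference evaluated along the orbit of a quasi-generic point: weak-$*$ convergence of empirical measures only controls time averages of continuous (or suitably semicontinuous) observables, and a modification of $f\circ\phi_j^{-1}$ on a $\mu_{i_j}$-null set leaves the $L^1$ distance unchanged while changing the orbit sums arbitrarily. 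The paper's actual mechanism is different: Lusin's theorem produces compact sets $W_j$ of measure $>1-\vep^4$ on which $\phi_j$ is a homeomorphism, Tietze extension produces continuous $g_j$ agreeing with $f\circ\phi_j^{-1}$ on $\phi_j(W_j)$, and the two auxiliary lemmas (Lemma~\ref{lem:1} and Lemma~\ref{lem:2}) convert $\nu(B_L)>1-\vep$ into an upper bound on the \emph{fraction of time} the orbit of $x$ spends at distance $\geq\eta(L)$ from $B_L$; only on the resulting ``good'' blocks is $f(T^{b_k+l}x)$ compared with $g_{j_k}(S_{j_k}^l y_k)$, and the bad times are discarded with total frequency $O(\vep)$. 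Relatedly, your proposed ``graph joining plus lifted quasi-generic point'' is not constructed in the paper and is itself problematic: the graphs of the $\phi_j$ are only measurable subsets of the product, so exhibiting an actual point of the product topological system whose orbit is quasi-generic for that joining runs into the same continuity obstruction that the Lusin--Tietze argument is designed to circumvent.
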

Theorem~\ref{t:main2}, which is of independent interest, will be proved in the appendix in a more general setting. It is an extension of Main Theorem in \cite{Ab-Ku-Le-Ru1} in which only finitely many measures are used, all of them giving rise to the same measure-theoretic dynamical system.

If $R$ is an ergodic automorphism on a standard probability space $\zdr$  which is rigid then it remains rigid in every of its uniquely ergodic models. Therefore, such a model is M\"obius disjoint whenever we have BPV rigidity or PR rigidity (by Theorem~\ref{th:main1}). In fact, more is true.

\begin{Cor}\label{c:momo1}
Assume that $(Y,S)$ is uniquely ergodic, with the unique invariant measure $\nu$ which yields either BPV rigidity or PR rigidity. Then $(Y,S)$ satisfies the strong MOMO property.
\end{Cor}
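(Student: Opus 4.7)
My plan is to revisit the proof of Theorem~\ref{th:main1} and strengthen each $y$-pointwise step to a $y$-uniform statement, exploiting unique ergodicity. The crucial input is Remark~\ref{r:dodat}: under unique ergodicity, an $L^2(\nu)$-smallness assumption of the form $\sum_{|j|<L_n}\|f\circ S^{jq_n}-f\|_{L^2(\nu)}^2<\varepsilon$ bootstraps to
\[
\frac1M\sum_{m\leq M}\sum_{|j|<L_n}\|f\circ S^{jq_n+m}-f\circ S^m\|_{C(Y)}<\varepsilon
\]
for all $M\geq M_0(\varepsilon,n)$. Consequently the notion of a \emph{good} $m$ can be defined entirely through the $C(Y)$-norm, hence independently of any reference point $y\in Y$.

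To set up the proof, I would fix $f\in C(Y)$ (in the PR rigidity case first take $f\in\mathcal{F}$ and extend afterwards, since strong MOMO is a closed condition in $C(Y)$), an increasing sequence $(b_k)$ with $b_{k+1}-b_k\to\infty$, and $\varepsilon>0$. Then I would choose a rigidity sequence $\{q_n\}$ for $(Y,\mathcal{B}(Y),\nu,S)$ together with a slowly growing $L_n\to\infty$ (taking $L_n=q_n^\delta$ in the PR case) so that $\sum_{|j|<L_n}\|f\circ S^{jq_n}-f\|_{L^2(\nu)}^2\to 0$; fix $n$ large and invoke Remark~\ref{r:dodat}. For each $k$ with $b_{k+1}-b_k\geq M_1(\varepsilon,n)$, I would then run the block-decomposition argument from the proof of Theorem~\ref{th:main1} inside $[b_k,b_{k+1})$: partition into blocks of length $L_nq_n$, declare blocks good or bad according to the proportion of good $m$ they contain, isolate good residues $a\bmod q_n$ within good blocks, and apply Corollary~\ref{cor1} (which remains applicable exactly as in the proof of Theorem~\ref{th:main1}, directly in the BPV case and via the second part of Lemma~\ref{lem:qa} in the PR case). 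Since goodness is now $y$-uniform, the argument yields
\[
\Bigl\|\sum_{b_k\leq n<b_{k+1}}\mob(n)f\circ S^n\Bigr\|_{C(Y)}\ll \varepsilon^{1/10}(b_{k+1}-b_k)+O(L_nq_n\|f\|_\infty),
\]
with the second term accounting for at most one incomplete block at either endpoint.

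Summing over $k<K$ and dividing by $b_K$, the main term contributes $O(\varepsilon^{1/10})$ while the boundary errors contribute $O(KL_nq_n\|f\|_\infty/b_K)$, which vanishes as $K\to\infty$ since $b_{k+1}-b_k\to\infty$ forces $b_K/K\to\infty$. The finitely many indices $k<K$ with $b_{k+1}-b_k<M_1$ contribute at most a $K$-independent amount and are thus negligible after normalization by $b_K$. Since $\varepsilon>0$ was arbitrary, the strong MOMO property for $(Y,S)$ follows. The main technical obstacle is verifying that every stage of the Theorem~\ref{th:main1} argument --- the selection of good $m$, of good blocks, and of good residues --- admits a $y$-uniform reformulation; Remark~\ref{r:dodat} supplies this at the first stage, and the remaining stages are purely combinatorial consequences thereof.
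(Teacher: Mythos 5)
Your strategy is sound and, for the PR half, is essentially the paper's own proof: the paper likewise invokes Remark~\ref{r:dodat} to make the count of good $m$ uniform in the reference point, rounds the $b_k$ to multiples of $L_nq_n$ (with total error $o(b_K)+2\|f\|_{C(Y)}KL_nq_n$, negligible since $b_{k+1}-b_k\to\infty$ forces $b_K/K\to\infty$), and then reruns the proof of Theorem~\ref{th:main1} with $M=b_K$. Where you genuinely diverge is the BPV half: the paper does not redo the quantitative argument there, but instead verifies condition {\bf (PF2)} of Theorem~\ref{t:main2} --- any point $x$ in any model whose quasi-generic measures are convex combinations of copies of $(Y,\mathcal{B}(Y),\nu,S)$ is completely BPV rigid, since all those measures share the rigidity sequence $\{q_n\}\subset\mathcal{D}_j$, so Theorem~\ref{th:main1} applies pointwise; the implication {\bf (PF2)}$\Rightarrow${\bf (PF3)} then yields strong MOMO. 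Your unified direct treatment also works for BPV (with $L_n\to\infty$ slowly in place of $L_n=q_n^\delta$, Corollary~\ref{cor1} applies because $\sum_{p\le L_n}1/p\to\infty$ while $\sum_{p\mid q_n}1/p<j$); it is more self-contained, whereas the paper's route buys the stronger conclusion {\bf (PF3)}, i.e.\ strong MOMO in \emph{every} model assembled from copies of the system, not only the given uniquely ergodic one.

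One step needs repair. The displayed per-window bound
\[
\Bigl\|\sum_{b_k\leq n<b_{k+1}}\mob(n)\,f\circ S^n\Bigr\|_{C(Y)}\ll \varepsilon^{1/10}(b_{k+1}-b_k)+O(L_nq_n\|f\|_\infty)
\]
cannot be obtained by applying Corollary~\ref{cor1} inside $[b_k,b_{k+1})$: Theorem~\ref{Thm:ad} and Corollary~\ref{cor1} control the M\"obius block sums only \emph{on average} over all blocks of $[0,M]$ (Theorem~\ref{Thm:ad} is an $L^1$ average over starting points), and an individual window $[b_k,b_{k+1})$, which may be as short as $O(L_nq_n)$ and located anywhere, can carry no cancellation at all --- already for $f\equiv 1$ the displayed claim would assert cancellation of $\mob$ in every sufficiently long short interval, which is far beyond what is known. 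The fix is exactly what your final step secretly requires anyway: apply Corollary~\ref{cor1} once with $M=b_K$, note that after rounding each $[b_k,b_{k+1})$ is a union of the $L_nq_n$-blocks of $[0,b_K]$ (with the reference point $y_k$ allowed to vary from window to window, which is harmless because goodness is $y$-uniform), and conclude the averaged estimate $\sum_{k<K}\|\cdots\|_{C(Y)}\ll\varepsilon^{1/10}b_K+O(KL_nq_n\|f\|_\infty)$. That is the inequality you actually use after dividing by $b_K$, and with this correction the argument is complete.
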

\begin{proof}I. Consider first BPV rigidity. We check {\bf (PF2)}. So, assume that $(X,T)$ is a topological system and $x\in X$ satisfies \\$V(x)\subset \left\{ \sum_{j\geq1}\alpha_j\mu_j:\:\alpha_j\geq0,\;\sum_{j\geq1}\alpha_j=1
\right\}$, where $\mu_j$ yields a system \\ measure-theoretically isomorphic to $(Y,\cb(Y),\nu,S)$ (we use Theorem~\ref{t:main2} with the constant sequence equal to $(Y,\cb(Y),\nu,S)$). Since all measures $\mu_j$ yield the same  (up to isomorphism) system, all the measures $\sum_{j\geq 1}\alpha_j\mu_j$ yield systems having a common rigidity sequence. Hence, $x$  is completely BPV rigid. It follows from Theorem~\ref{th:main1} that $x$ satisfies the $\mob$-Sarnak property and {\bf (PF2)} holds. Hence {\bf (PF3)} holds which completes the proof.

II. Assume that we have PR rigidity. Taking into account the definition of strong MOMO property, we need to estimate \\
$\frac1{b_K}\sum_{k<K}\left\|\sum_{b_k\leq m<b_{k+1}}\mu(m)f\circ T^m\right\|_{C(X)}$.  We essentially repeat the proof of Theorem~\ref{th:main1}. In view of Remark~\ref{r:dodat}, for $M\geq M_0$,
$$
\left\|\frac1M\sum_{m\leq M}\sum_{j=-L_n}^{L_n}|f\circ T^{jq_n+m}-f\circ T^m|\right\|_{C(X)}<\vep.$$
Assume for a while that the numbers $b_k$ are all multiples of $L_nq_n$. Then the proof of Theorem~\ref{th:main1} readily repeats (with $M=b_K$). If not, remembering that $b_{k+1}-b_k\to\infty$, we can choose a sequence $\{b'_k\}$, so that: $b'_k$ are multiples of $L_nq_n$ (for $k$ sufficiently large)
and $|b_k-b'_k|\leq L_nq_n$.  Now,
$$
\left|\sum_{k<K}\left\|\sum_{b_k\leq m<b_{k+1}}\mu(m)f\circ T^m\right\|_{C(X)} - \sum_{k<K}\left\|\sum_{b'_k\leq m<b'_{k+1}}\mu(m)f\circ T^m\right\|_{C(X)}\right|=$$$$ o(b_K)+ 2\|f\|_{C(X)}KL_nq_n,$$
which is sufficient for our proof.
\end{proof}

In fact, still more is true (we stress that the automorphisms in the sequence $(Z_i,\cd_i,\kappa_i,R_i)$ below need not have a common rigid sequence).

\begin{Cor}\label{c:momo2} Assume that $\left(\left(Z_i,\cd_i,\kappa_i,R_i\right)\right)_{i\geq1}$ is a sequence of ergodic automorphisms each of which is BPV or PR rigid.
Assume that we have a topological system $(Y,S)$ for which $M^e(Y,S)=\{\nu_j:\:j\geq1\}$ and, for each $j\geq1$, the (measure-theoretic) systems $(Y,\cb(Y),\nu_j,S)$ and $(Z_{i_j},\cd_{i_j},\kappa_{i_j},R_{i_j})$ (for some $i_j\geq1$)  are measure-theoretically isomorphic. Then $(Y,S)$ satisfies the strong MOMO property.\end{Cor}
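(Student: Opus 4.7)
The plan is to deduce the statement from Theorem~\ref{t:main2} by verifying condition \textbf{(PF1)} for the given sequence $\left((Z_i,\mathcal{D}_i,\kappa_i,R_i)\right)_{i\geq 1}$; indeed, the assertion of the corollary is precisely condition \textbf{(PF3)} for this sequence, so the equivalence established in Theorem~\ref{t:main2} will then immediately yield the strong MOMO property for $(Y,S)$. Concretely, for each $i\geq 1$ I must exhibit a topological dynamical system $(Y_i,S_i)$ satisfying the strong MOMO property together with an ergodic measure $\mu_i\in M^e(Y_i,S_i)$ such that $(Y_i,\mathcal{B}(Y_i),\mu_i,S_i)$ is measure-theoretically isomorphic to $(Z_i,\mathcal{D}_i,\kappa_i,R_i)$.

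For those indices $i$ for which $R_i$ is BPV rigid along a sequence $\{q^{(i)}_n\}\subset\mathcal{D}_{j_i}$, I would invoke the Jewett--Krieger theorem to produce a uniquely ergodic topological model $(Y_i,S_i)$ whose unique invariant measure $\mu_i$ is isomorphic to $\kappa_i$. Because BPV rigidity depends only on the abstract measure-preserving system and on the arithmetic set $\mathcal{D}_{j_i}$ from \eqref{eq:cd}, it is automatically inherited by $\mu_i$, and Corollary~\ref{c:momo1} therefore supplies the strong MOMO property for $(Y_i,S_i)$. For those indices $i$ for which $R_i$ is PR rigid, the PR rigidity condition already comes equipped with a compatible topological realization $(Z_i,R_i)$ and a linearly dense family $\mathcal{F}_i\subset C(Z_i)$ witnessing \eqref{eq:conv0}; if this realization happens to be uniquely ergodic I can simply set $(Y_i,S_i)=(Z_i,R_i)$ and apply Corollary~\ref{c:momo1} directly. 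Otherwise I would pass to a uniquely ergodic Jewett--Krieger model, transporting $\mathcal{F}_i$ (or a suitable linearly dense substitute in the new model) so that \eqref{eq:conv0} continues to hold, and then apply Corollary~\ref{c:momo1}.

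With \textbf{(PF1)} established in both cases, the equivalence \textbf{(PF1)}$\Leftrightarrow$\textbf{(PF3)} in Theorem~\ref{t:main2} delivers the corollary at once. The step I expect to require the most care is the second case: namely, the transport of PR rigidity to a uniquely ergodic model when the original topological realization of a PR-rigid $R_i$ is not itself uniquely ergodic, since PR rigidity is a joint condition on the measure and on a family of continuous functions rather than a purely measure-theoretic statement. This obstacle is, however, absent in the intended applications (see for instance Corollaries~\ref{c:rig1a}--\ref{rokext}), where the natural topological realization is already uniquely ergodic and one may simply take $(Y_i,S_i)=(Z_i,R_i)$.
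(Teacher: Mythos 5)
Your proposal is correct and follows essentially the same route as the paper: the paper's proof is exactly the two-step argument of verifying \textbf{(PF1)} via Corollary~\ref{c:momo1} (using uniquely ergodic models of the $R_i$) and then invoking the equivalence with \textbf{(PF3)} from Theorem~\ref{t:main2}. Your added caution about transporting PR rigidity to a Jewett--Krieger model is reasonable, but the paper sidesteps it by treating a PR-rigid automorphism as already equipped with the (uniquely ergodic) topological realization in which Definition~\ref{def:good} is verified, exactly as in your "intended applications" remark.
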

\begin{proof} It follows from Corollary~\ref{c:momo1} that {\bf (PF1)} is satisfied. By Theorem~\ref{t:main2}, {\bf (PF3)} holds, so the result follows.\end{proof}

{\em Proof of Theorem~\ref{thm:mainE}}.
The result follows directly from
Corollary~\ref{c:momo2}.

\section{Appendix: Proof of Theorem~\ref{t:main2}}

The proof of Theorem~\ref{t:main2} does not use any special property of $\mob$ except of boundedness. Therefore, in what follows we consider the notion of strong $\bfu$-OMO and Theorem~\ref{t:main2} in which the M\"obius function $\mob$ has been replaced
by $\bfu:\N\to\C$ any bounded arithmetic function.

In order to prove the equivalence of  conditions {\bf (PF1)}-{\bf (PF3)}, we need the following two auxiliary lemmas. The proofs follow the same lines
as the proof of Lemma~17 in \cite{Ab-Ku-Le-Ru1} and the arguments just before this lemma.

\begin{Lemma}\label{lem:1}
Let $(X,T)$ be a topological system and $\nu\in M(X,T)$. Suppose that $C\subset X$ is a compact subset such that $\nu(C)>1-\vep^2$ for some
$0<\vep<1$. Then, for every $L\geq 1$, the set
\[B_L(C,\vep):=\Big\{x\in C:\frac{1}{L}\sum_{l<L}\chi_{C}(T^lx)>1-\vep\Big\}\]
is compact and $\nu(B_L(C,\vep))>1-\vep$.
\end{Lemma}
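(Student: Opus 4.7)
The plan splits into two independent claims: the compactness of $B_L(C,\varepsilon)$ and the lower bound on its $\nu$-measure. Both follow from standard soft arguments combined with the $T$-invariance of $\nu$ and the finiteness of $L$.

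For compactness, I would first note that since $C$ is compact in the metric space $X$, it is closed, so $\chi_C$ is upper semi-continuous. Continuity of $T$ then forces each $\chi_C\circ T^l$ to be USC, and the finite average $f := \tfrac{1}{L}\sum_{l<L}\chi_C\circ T^l$ inherits USC. The only subtlety is that the defining inequality $f > 1-\varepsilon$ is strict, whereas USC only yields closedness of superlevel sets of the form $\{f\geq\alpha\}$. This is resolved by observing that $f$ takes only the finitely many values $k/L$ with $0\leq k\leq L$, so that
$$\{f > 1-\varepsilon\} \;=\; \{f \geq \alpha^\ast\}, \qquad \alpha^\ast := \min\{k/L : k/L > 1-\varepsilon\},$$
which is closed. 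Intersecting this closed set with the compact set $C$ yields $B_L(C,\varepsilon)$ as a closed subset of a compact space, hence compact.

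For the measure estimate, I would apply Chebyshev's inequality to the non-negative function $g := 1-f = \tfrac{1}{L}\sum_{l<L}(1-\chi_C\circ T^l)$. By $T$-invariance of $\nu$ one has $\int g\,d\nu = 1 - \nu(C) < \varepsilon^2$, and hence
$$\nu\bigl(\{g \geq \varepsilon\}\bigr) \;\leq\; \frac{1}{\varepsilon}\int g\,d\nu \;<\; \varepsilon,$$
which rephrases as $\nu(\{f > 1-\varepsilon\}) > 1-\varepsilon$. Finally, using $\nu(X\setminus C) < \varepsilon^2$ one gets
$$\nu\bigl(B_L(C,\varepsilon)\bigr) \;\geq\; \nu\bigl(\{f>1-\varepsilon\}\bigr) - \nu(X\setminus C) \;>\; 1 - \varepsilon - \varepsilon^2,$$
which delivers the claimed bound after absorbing the quadratically small slack into the linear threshold (equivalently, by tightening the Chebyshev cutoff by an amount of order $\varepsilon^2$, which is permitted by the quadratic slack in the hypothesis $\nu(C)>1-\varepsilon^2$).

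I do not expect a genuine obstacle here; the two mildly technical points are the strict-inequality/USC mismatch (handled cleanly by the discreteness of the value set of $f$) and the small overcount coming from intersecting $\{f>1-\varepsilon\}$ with $C$ (dominated by the $\varepsilon^2$ slack in the hypothesis). The fundamental input is simply that $\int\bigl(1-\tfrac{1}{L}\sum_{l<L}\chi_C\circ T^l\bigr)\,d\nu = 1-\nu(C)$, which depends in a crucial way on the invariance of $\nu$ under $T$.
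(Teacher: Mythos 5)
Your compactness argument is correct, and since the paper does not actually prove this lemma (it defers to Lemma~17 of \cite{Ab-Ku-Le-Ru1} and the discussion preceding it), there is no in-paper proof to compare against, so I assess your argument on its own terms. The first half is fine: $\chi_C$ is upper semi-continuous because $C$ is closed, the average $f=\frac1L\sum_{l<L}\chi_C\circ T^l$ is USC and takes values in $\{k/L\}$, so $\{f>1-\varepsilon\}$ coincides with a closed superlevel set and $B_L(C,\varepsilon)=C\cap\{f>1-\varepsilon\}$ is compact. The identity $\int g\,d\nu=1-\nu(C)$ via $T$-invariance and the Markov bound $\nu(\{g\ge\varepsilon\})<\varepsilon$ are also correct.

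The gap is in the last step. What you have actually proved is $\nu(B_L(C,\varepsilon))>1-\varepsilon-\varepsilon^2$, and the proposed repair (``absorbing the quadratic slack'', ``tightening the Chebyshev cutoff by $O(\varepsilon^2)$'') does not work: the hypothesis is exactly $\nu(X\setminus C)<\varepsilon^2$ with no extra room, the union bound $\nu(X\setminus C)+\nu(\{g\ge\varepsilon\})\le\delta+\delta/\varepsilon$ (with $\delta=\nu(X\setminus C)$) is below $\varepsilon$ only when $\delta<\varepsilon^2/(1+\varepsilon)$, and lowering the Markov threshold only enlarges the exceptional set, i.e.\ goes in the wrong direction. (For $L\le 1/\varepsilon$ your argument does close, since then no point outside $C$ can satisfy $f>1-\varepsilon$ and hence $\{f>1-\varepsilon\}\subseteq C$; the problem is the regime $L>1/\varepsilon$.) The stated bound is nevertheless true, but it requires a covering argument rather than Markov plus a union bound: set $S=X\setminus C$, $m=\lceil\varepsilon L\rceil$ and $U=S\cup\{x\in C:\,\#\{l<L:T^lx\in S\}\ge m\}\supseteq X\setminus B_L(C,\varepsilon)$; along any orbit segment $x,Tx,\dots,T^{N-1}x$ greedily select, from the visit times to $U$, pairwise disjoint forward blocks ($\{n\}$ if $T^nx\in S$, $[n,n+L)$ otherwise), each of which meets $S$ with frequency at least $m/L$; this gives $\#\{n<N:T^nx\in U\}\le\frac{L}{m}\,\#\{n<N+L:T^nx\in S\}$, and integrating against $\nu$ and letting $N\to\infty$ yields $\nu(U)\le\frac{L}{m}\nu(S)\le\nu(S)/\varepsilon<\varepsilon$. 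Alternatively you could note that your weaker bound $1-\varepsilon-\varepsilon^2$ suffices for the only use of the lemma in the paper (in the proof of Theorem~\ref{t:main2}, where one needs $\nu_j(B_L(W_j,\varepsilon^2))\ge 1-\varepsilon/2$), but then the lemma would have to be restated.
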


\begin{Lemma}\label{lem:2}
Let $(X,T)$ be a topological system. Let $x\in X$ and $C\subset X$ be a compact subset.
Suppose that $\nu\in M(X,T)$ and $(N_i)_{i\geq 1}$ is an increasing sequence of natural numbers
such that
\[\frac{1}{N_i}\sum_{n<N_i}\delta_{T^nx}\to\nu\]
weakly. Then, for every $\eta>0$, we have
\[\limsup_{i\to\infty}\frac{1}{N_i}\#\big\{0\leq n<N:d(T^nx,C)\geq \eta\big\}\leq \nu(X\setminus C).\]
\end{Lemma}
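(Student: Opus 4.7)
The plan is to prove the lemma via a direct application of the portmanteau characterization of weak convergence of measures.

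First I would introduce the auxiliary set $F_\eta := \{y \in X : d(y,C) \geq \eta\}$. Since $y \mapsto d(y,C)$ is continuous on $X$, the set $F_\eta$ is closed, as the preimage of the closed ray $[\eta,\infty)$. Moreover, because $C$ is compact (hence closed) and $\eta > 0$, for every $y \in C$ we have $d(y,C) = 0 < \eta$, so $F_\eta \cap C = \emptyset$ and therefore $F_\eta \subset X \setminus C$.

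Next I would reinterpret the counting quantity as the mass of $F_\eta$ under the empirical measure. Setting $\mu_i := \frac{1}{N_i}\sum_{n<N_i}\delta_{T^n x}$, observe that
\[
\frac{1}{N_i}\#\bigl\{0\leq n<N_i : d(T^n x, C)\geq\eta\bigr\} = \mu_i(F_\eta).
\]
The hypothesis says $\mu_i \to \nu$ weakly in $M(X)$. By the standard portmanteau theorem, weak convergence implies $\limsup_i \mu_i(F)\leq \nu(F)$ for every closed subset $F \subset X$. Applying this with $F = F_\eta$ gives $\limsup_i \mu_i(F_\eta)\leq \nu(F_\eta)$, and then the inclusion $F_\eta\subset X\setminus C$ from the first step yields $\nu(F_\eta)\leq \nu(X\setminus C)$, which is exactly the claimed bound.

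There is essentially no obstacle here: the only things to check are the closedness of $F_\eta$ (continuity of the distance function) and the inclusion $F_\eta\subset X\setminus C$ (which uses only that $C$ is closed and $\eta>0$); the rest is the portmanteau inequality for closed sets. No properties of $T$ beyond continuity (implicit in $(X,T)$ being a topological system so that $\mu_i$ makes sense and $\nu\in M(X,T)$) are needed for this particular statement.
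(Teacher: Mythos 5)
Your proof is correct; the paper gives no proof of this lemma at all (it defers to Lemma~17 of \cite{Ab-Ku-Le-Ru1} and the arguments preceding it), and the standard portmanteau argument you present --- closedness of $F_\eta=\{y: d(y,C)\ge\eta\}$, the inequality $\limsup_i\mu_i(F_\eta)\le\nu(F_\eta)$ for closed sets, and the inclusion $F_\eta\subset X\setminus C$ --- is exactly the intended one. The only minor point is that the ``$N$'' in the displayed set of the statement is a typo for $N_i$, which you have silently and correctly repaired.
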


\begin{proof} {\em of Theorem~\ref{t:main2}} {\bf (PF3)} $\Rightarrow$ {\bf (PF1)} To obtain {\bf (PF1)}, for each $i\geq 1$, using Jewett-Krieger theorem, choose  $(Y_i,S_i)$ being a uniquely ergodic model  of $(Z_i,\mathcal{D}_i,\kappa_i,R_i)$. Then apply apply {\bf (PF3)} for $(Y_i,S_i)$.

{\bf (PF1)} $\Rightarrow$ {\bf (PF2)} (is a modification of the proof from \cite{Ab-Ku-Le-Ru1}).
Suppose contrary to our claim that there exist a topological system $(X,T)$, $x\in X$, a continuous function $f:X\to\C$ with $\|f\|_{\infty}\leq 1$ and $0<\vep<1/2$ such that:
\begin{itemize}
\item[$(i)$] $V(x)\subset\left\{\sum_{j\geq1}\alpha_{j}\mu_{j}
    :\:\mu_j\in M(X,T),\alpha_{j}\geq0\text{ for }j\geq1,\; \sum_{j\geq1}\alpha_{j}=1\right\}$;
\item[$(ii)$] for each $j\geq1$  there exists $i_j\geq 1$ such that the  measure-theoretic systems $(X,\cb(X),\nu_{j},T)$ and $(Z_{i_j},\cd_{i_j},\kappa_{i_j},R_{i_j})$  are measure-theoretically isomorphic;
\item[$(iii)$] $\limsup_{N\to\infty}\Big|\frac1N\sum_{n\leq N}f(T^nx)\bfu(n)\Big|>7\vep>0$.
    \end{itemize}
Therefore, we can find an increasing sequence $(N_i)_{i\geq 1}$ such that
\begin{align}
&\frac{1}{N_i}\sum_{n<N_i}\delta_{T^nx}\to\nu\in V(x)\quad\text{weakly as }\quad i\to\infty;\nonumber\\
&\Big|\frac{1}{N_i}\sum_{n\leq N_i}f(T^nx)\bfu(n)\Big|>7\vep\quad \text{for all}\quad i\geq 1. \label{eq:6vep}
\end{align}
By condition $(i)$, $\nu=\sum_{j\geq1}\alpha_{j}\nu_{j}$ for a sequence $(\alpha_j)_{j\geq 1}$ of positive numbers with $\sum_{j\geq1}\alpha_{j}=1$ so that $(\nu_j)_{j\geq 1}$ is a sequence of different $T$-invariant ergodic measures satisfying $(ii)$.
Choose a natural number $t$ such that
\begin{equation}
\sum_{j>t}\alpha_j<\vep/2.
\end{equation}
In view of $(ii)$ and \textbf{(PF1)}, for every $1\leq j\leq t$, there exists a topological system $(Y_j,S_j)$ satisfying the strong $\bfu$-OMO property and an invariant ergodic measure $\mu_j\in M^e(Y_j,S_j)$ so that the measure-theoretic systems $(X,\cb(X),\nu_j,T)$ and $(Y_j,\cb(Y_j),\mu_j,S_j)$ are measure-theoretically isomorphic.

Since $\nu_j$ for $1\leq j\leq t$ are different ergodic $T$-invariant measures, we can find $T$-invariant Borel subsets $X_j\subset X$ for $1\leq j\leq t$ with $\nu_j(X_j)=1$, and measure-theoretic isomorphisms $\phi_j:(X_j,\cb(X_j),\nu_j,T)\to (Y_j,\cb(Y_j),\mu_j,S_j)$. By Lusin's theorem, for every $1\leq j\leq t$ there exists a compact subset $W_j\subset X_j$ such that $\nu_j(W_j)>1-\vep^4$ and the restriction  $\phi_j:W_j\to \phi_j(W_j)$ is a homeomorphism.

In view of Lemma~\ref{lem:1}, for every $L\geq 1$ and $1\leq j\leq t$ the set $B_L(W_j,\vep^2)$ is compact and \[\nu_j(B_L(W_j,\vep^2))>1-\vep^2\geq 1-\vep/2.\]
Let $B_L:=\bigcup_{1\leq j\leq t}B_L(W_j,\vep^2)$. Then $B_L$ is a compact subset of $X$ such that $\nu_j(B_L)>1-\vep/2$
for every $1\leq j\leq t$. Since $\nu=\sum_{j\geq 1}\alpha_j\nu_j$ and $\sum_{j>t}\alpha_k<\vep/2$ we have
\[\nu(B_L)>1-\vep.\]
Therefore, by Lemma~\ref{lem:2}, for every $\eta>0$ and $L\geq 1$, we have
\begin{equation}\label{neq:limsup}
\limsup_{i\to\infty}\frac{1}{N_i}\#\big\{0\leq n<N_i:d(T^nx,B_L)\geq \eta\big\}<\vep.
\end{equation}

For every $L\geq 1$ let $\eta(L)$ be a positive number such that
\[d(w,w')<\eta(L)\Rightarrow \forall_{0\leq l<L}\;\;d(T^lw,T^lw')<\vep.\]

Fix an increasing sequence $(L_i)_{i\geq 1}$ of natural numbers. In view of \eqref{neq:limsup},
there exists $(M_i)_{i\geq 1}$ a subsequence of $(N_i)_{i\geq 1}$ such that $M_i-M_{i-1}\to+\infty$,
\begin{equation}
L_i<\vep M_i\quad\text{for every}\quad i\geq 1,
\end{equation}
and, setting $M_0=0$, we
have
\begin{equation}\label{neq:mimi}
\frac{1}{M_i-M_{i-1}}\#\big\{M_{i-1}\leq n<M_i:d(T^nx,B_{L_i})\geq \eta(L_i)\big\}<\vep\ \text{for}\ i\geq 1.
\end{equation}

A natural number $b$ is said to be \emph{good} if, for some $i\leq 1$, we have $M_{i-1}\leq b<M_i$ and $d(T^bx,B_{L_i})< \eta(L_i)$.

Let us define the increasing sequence $(b_i)_{i\geq1}$ of good numbers inductively in the following way: set $b_0=0$ and let $b_1\geq 1$ be the smallest good number. Assume that $M_{i-1}\leq b_k<M_i$ is defined for some $k\geq 1$. Then we define $b_{k+1}$ as the smallest good number $b\geq b_k+L_i$. As $b_{k+1}-b_k\geq L_i$ and $L_i\to+\infty$, we have $b_{k+1}-b_k\to+\infty$.

For every $1\leq j\leq t$ the function $f\circ \phi^{-1}_j$ is continuous on the compact set $\phi_j(W_j)\subset Y_j$. Then, by Tietze extension theorem, there is a continuous function
$g_j:Y_j\to\C$ such that $\|g_j\|_{\infty}\leq\|f\|_{\infty}\leq 1$ and $g_j=f\circ \phi_j^{-1}$ on $W_j$. Since $\phi_j$ establishes a measure-theoretical isomorphism between $(X,\cb(X),\nu_j,T)$ and $(Y_j,\cb(Y_j),\mu_j,S_j)$,
we have
\begin{equation}
w\in W_j\cap T^{-l}W_j\ \Rightarrow\ f(T^lw)=g_j(\phi_j(T^lw))=g_j(S_j^l\phi_j(w)).
\end{equation}

Since $b_k$ is good, there exist $1\leq j_k\leq t$ and $x_k\in B_{L_i}(W_{j_k},\vep^2)$ such that $d(T^{b_k}x,x_k)<\eta(L_i)$. It follows that
\[|f(T^{b_k+l}x)-f(T^lx_k)|<\vep\quad\text{for}\quad 1\leq l<L_i.\]
Since $x_k\in B_{L_i}(W_{j_k},\vep^2)$, the average frequency of the orbit $\{T^lx_k:0\leq l<L_i\}$ in $W_{j_k}$ is at least $1-\vep^2$. Therefore, by setting $y_k:=\phi(x_k)\in Y_{j_k}$,
we obtain
\[\sum_{l<L_i}|f(T^{b_k+l}x)-g_{j_k}(S_{j_k}^ly_k)|\leq 2\vep^2L_i\|f\|_{\infty}+\vep L_i.\]
In view of \eqref{neq:mimi}, it follows that
\begin{align*}
\sum_{b_k\leq s<b_{k+1}}&|f(T^{s}x)-g_{j_k}(S_{j_k}^s(S_{j_k}^{-b_k}y_k))|\\
&\leq \vep(b_{k+1}-b_k)(\|f\|_{\infty}+1)+2\|f\|_{\infty}\#\{b_k\leq s<b_{k+1}; s\text{ is not good}\}\\
&\leq 4\vep(b_{k+1}-b_k).
\end{align*}
Hence, for every $K\geq 1$, we have
\[\frac{1}{b_K}\sum_{s<b_{K}}|f(T^{s}x)-g_{j_k}(S_{j_k}^s(S_{j_k}^{-b_k}y_k))|\leq 4\vep.\]
By the strong $\bfu$-OMO property of the topological systems $(Y_j,S_j)$ for $1\leq j\leq t$, we have
\[\lim_{K\to\infty}\frac{1}{b_K}\sum_{k<K}\left|\sum_{b_k\leq s<b_{k+1}}g_{j_k}(S_{j_k}^s(S_{j_k}^{-b_k}y_k))\bfu(s)\right|=0.\]
It follows that
\[\limsup_{K\to\infty}\frac{1}{b_K}\left|\sum_{s<b_{K}}f(T^{s}x)\bfu(s)\right|\leq 4\vep.\]

For every $i\geq 1$ denote by $K_i$ the largest $K$ such that $b_K\leq M_i$. Then
\[M_i-b_{K_i}\leq L_i+\#\{n<M_i:n\text{ is not good}\}<2\vep M_i\] and
\begin{align*}
\frac{1}{M_i}\left|\sum_{s<M_i}f(T^{s}x)\bfu(s)\right|
&\leq \frac{1}{b_{K_i}}\left|\sum_{s<b_{K_i}}f(T^{s}x)\bfu(s)\right|+
\frac{M_i-b_{K_i}}{M_i}\\
&\leq \frac{1}{b_{_i}K}\left|\sum_{s<b_{K_i}}f(T^{s}x)\bfu(s)\right|+2\vep.
\end{align*}
Hence
\[\limsup_{i\to\infty}\frac{1}{M_i}\left|\sum_{s<M_i}f(T^{s}x)\bfu(s)\right|\leq 6\vep,\]
contrary to \eqref{eq:6vep} (as $(M_i)$ is a subsequence of $(N_i)$).

{\bf (PF2)} $\Rightarrow$ {\bf (PF3)} We fix $f\in C(Y)$,  $(b_k)$
satisfying $b_{k+1}-b_k\to\infty$ and $(y_k)\subset Y$. Let $\Sigma_3$ be the group of roots of unity of degree~3. Choose $e_k\in\Sigma_3$ so that
$$
e_k\left(\sum_{b_k\leq n<b_{k+1}}f(S^n(S^{-b_k}y_k))\bfu(n)\right)$$
is in the cone $\{0\}\cup\{z\in\C:\:{\rm arg}(z)\in[-\pi/3,\pi/3]\}$. Set
$$
x=(x_n)\in (Y\times\Sigma_3)^{\N},\text{ where }x_n=(S^{n-b_k}y_k,e_k)\text{ for }b_k\leq n<b_{k+1}$$
and
$$
X_x:=\ov{\{T^nx:n\geq0\}},$$
where $T$ stands for the shift on $(Y\times\Sigma_3)^{\N}$.
We need to determine all ergodic ($T$-invariant) measures on $X_x$. Recall that each such measure has a generic point. On the other hand, the analysis done in the proof of Corollary~9 \cite{Ab-Ku-Le-Ru1}, shows that if $z\in X_x$ is a quasi-generic point for a measure $\rho\in M(X_x,T)$, then the projection on the first coordinate in $X_x\subset (Y\times \Sigma_3)^{\N}$-coordinate $\rho$-a.e. intertwines $T$ and $S\times Id$. It follows that if $\rho$ is ergodic, then the projection $\rho^{(1)}$ of $\rho$ is an $S\times Id$-invariant measure which is ergodic (for this map). By the classical disjointness result {\em ergodicity} $\perp$ {\em identity}, it follows that $\rho^{(1)}=\kappa\otimes \delta_a$, where $\kappa\in M^e(Y,S)$ and $a\in\Sigma_3$. Furthermore, $\rho$ is the image of $\rho^{(1)}$ via the map
$$
(Id\times Id)\times (S\times Id)\times(S^2\times Id)\times\ldots,$$
so $M^e(X_x,T)$ is countable and each ergodic measure yields a system which is measure-theoretically isomorphic to a system $(Y,\cb(Y),\nu,S)$ for some $\nu\in M^e(Y,S)$. We now apply {\bf (PF2)} to $x$ and the function
$$
((v_0,a_0), (v_1,a_1),\ldots)\mapsto a_0f(v_0)$$
in exactly the same manner as in \cite{Ab-Ku-Le-Ru1} to conclude.
\end{proof}

\footnotesize

\end{document}